\newtheorem{theorem}{Theorem}[section]
\newtheorem{lemma}[theorem]{Lemma}
\newtheorem{proposition}[theorem]{Proposition}
\newtheorem{conjecture}[theorem]{Conjecture}
\newtheorem{corollary}[theorem]{Corollary}
\theoremstyle{definition}
\newtheorem{definition}[theorem]{Definition}
\newtheorem{remark}[theorem]{Remark}
\newtheorem{example}[theorem]{Example}
\DeclareMathOperator{\gr}{gr}
\DeclareMathOperator{\multi}{multi}
\DeclareMathOperator{\unknot}{unknot}
\newcommand{\Z}{\mathbb{Z}}
\newcommand{\Q}{\mathbb{Q}}
\newcommand{\Cc}{\mathcal{C}}
\newcommand{\Hc}{\mathcal{H}}
\newcommand{\CFK}{\mathit{CFK}}
\newcommand{\HFK}{\mathit{HFK}}
\newcommand{\gl}{\mathfrak{gl}}
\title[Evaluations of link polynomials]{Evaluations of link polynomials and recent constructions in Heegaard Floer theory}
\author{Larry Gu}
\email{ljgu@usc.edu}
\address{Department of Mathematics\\ University of Southern California \\ Los Angeles, CA}
\author{Andrew Manion}
\email{amanion@usc.edu}
\address{Department of Mathematics\\ University of Southern California \\ Los Angeles, CA}
\begin{document}

\begin{abstract}
Using a definition of Euler characteristic for fractionally-graded complexes based on roots of unity, we show that the Euler characteristics of Dowlin's ``$\mathfrak{sl}(n)$-like'' Heegaard Floer knot invariants $\HFK_n$ recover both Alexander polynomial evaluations and $\mathfrak{sl}(n)$ polynomial evaluations at certain roots of unity for links in $S^3$. We show that the equality of these evaluations can be viewed as the decategorified content of the conjectured spectral sequences relating $\mathfrak{sl}(n)$ homology and $\HFK_n$.
\end{abstract}

\maketitle

\section{Introduction}

Ozsv{\'a}th--Szab{\'o}'s theory of Heegaard Floer homology \cite{HFOrig} is a flexible set of constructions yielding many types of invariants for low-dimensional manifolds. Even for knots and links in $S^3$, the ideas of Heegaard Floer homology can be applied in several ways to produce a family of related invariants known collectively as knot Floer homology or $\HFK$ \cite{HFKOrig,RasmussenThesis}. The simplest variant, $\widehat{\HFK}$ as applied to (single-component) knots in $S^3$, assigns to a knot $K$ a bigraded vector space $\widehat{\HFK}(K)$ (say over $\Q$) whose graded Euler characteristic is the Alexander polynomial $\Delta_K(t)$. Here we focus on more recent variants $\widehat{\HFK}_n$ (which we will call $\overline{\HFK}_n$) and $\HFK_n$, due to Dowlin \cite{dowlin2018family}, with relationships to Khovanov homology and $\mathfrak{sl}(n)$ homology more generally.

Reduced and unreduced $\mathfrak{sl}(n)$ homology \cite{KR1}, like $\widehat{\HFK}$, also assign bigraded vector spaces to knots $K$. Their graded Euler characteristics are the reduced and unreduced $\mathfrak{sl}(n)$ polynomials of $K$. The $\mathfrak{sl}(n)$ polynomials and the Alexander polynomial are all specializations of the two-variable HOMFLY-PT polynomial of $K$, leading to various relationships between the $\mathfrak{sl}(n)$ and Alexander polynomials at special values.

For the homology theories at the categorified level, one can often think of these relationships between knot polynomial evaluations as being categorified by certain spectral sequences that are known or conjectured to exist. For instance, the appearance of the $\mathfrak{sl}(n)$ polynomial as an evaluation of the HOMFLY-PT polynomial is categorified by Rasmussen's spectral sequences \cite{rasmussen2006some} from triply graded HOMFLY-PT homology \cite{KR2} to $\mathfrak{sl}(n)$ homology; the appearance of the Alexander polynomial as a HOMFLY-PT evaluation should be categorified by the conjectured spectral sequence from HOMFLY-PT homology to $\HFK$ \cite{dunfield2006superpolynomial}. 

In general, given some construction or conjecture in the realm of $\mathfrak{sl}(n)$ homology or $\HFK$, it is natural to ask ``what does it categorify, if anything?''; in other words, ``what is its decategorified content?''. Often this is something simpler than what one started with; for example, the identities relating $\mathfrak{sl}(n)$ polynomials and Alexander polynomials with the HOMFLY-PT polynomial are simpler than the known and conjectured spectral sequences from HOMFLY-PT homology to $\mathfrak{sl}(n)$ homology and $\HFK$. Investigating the decategorified level can be an easy way to gain valuable information about the structure one expects at the categorified level.

For this reason, it is natural to ask about the decategorified content of Dowlin's conjectured spectral sequences \cite[Conjecture 1.6]{dowlin2018family} from reduced and unreduced $\mathfrak{sl}(n)$ homology to $\HFK$ (generalizing Rasmussen's conjecture for $\mathfrak{sl}(2)$, proved by Dowlin in \cite{dowlin2018spectral}). The specific variants of $\HFK$ appearing in the conjectured spectral sequences are Dowlin's singly-graded variants $\overline{\HFK}_n$ and $\HFK_n$ for links (the first of these agrees with a grading collapse of $\widehat{\HFK}$ when applied to knots).

One complication is that as defined, $\overline{\HFK}_n$ and $\HFK_n$ are the homology of complexes whose differentials increase the single grading $\gr_n$ by $n$. The usual Euler characteristic formula, applied to such a complex, will not always be homotopy invariant. Instead, we divide the grading on $\overline{\HFK}_n$ and $\HFK_n$ by $n$, producing $\frac{1}{n}\Z$-graded complexes whose differentials increase the grading by one, and work with a natural generalization of the Euler characteristic to this setting (based on roots of unity and admitting an interpretation in terms of Grothendieck groups of triangulated categories). 

\begin{theorem}\label{thm:IntroHFKn}
Let $L$ be an $\ell$-component link in $S^3$ and let $n \geq 2$. The Euler characteristic of $\frac{\gr_n}{n}$-graded $\overline{\HFK}_n(L)$, in our sense, equals $e^{\pi i (1 - \ell) / n} \Delta_L(t)|_{t^{1/2} = -e^{- \pi i / n}}$ where $\Delta_L(t)$ is the symmetric single-variable Alexander polynomial of $L$ (a Laurent polynomial in $t^{1/2}$), and the Euler characteristic of $\HFK_n(L)$ is zero. For $n = 1$, the Euler characteristics of both $\overline{\HFK}_1(L)$ and $\HFK_1(L)$ are $1$ for all links $L$.
\end{theorem}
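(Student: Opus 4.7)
The plan is to reduce everything to known decategorification statements about $\widehat{\HFK}$. First I would unpack the definition of the fractionally-graded Euler characteristic: for a $\frac{1}{n}\Z$-graded complex $C$ whose differential raises the grading by $1$, it amounts to a weighted sum $\sum_{j} e^{\pi i j/n} \dim C^{\gr_n = j}$, with $e^{\pi i/n}$ playing the role of ``$-1$'' one grading step at a time. This reformulates the invariant as the bigraded Poincar\'e polynomial of the underlying $(M, A)$-bigraded complex computing $\overline{\HFK}_n(L)$, specialized along the substitution $u = e^{\pi i a/n}$, $t = e^{\pi i b/n}$ dictated by Dowlin's formula $\gr_n = aM + bA$.

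Second, I would match coefficients. For Dowlin's choice, $u$ specializes to $-1$, and the bigraded Euler characteristic collapses to the standard refinement $\sum_{M,A}(-1)^M t^A \dim \widehat{\HFK}^{M,A}(L)$. For a link in $S^3$ this equals $(-t^{1/2})^{\ell-1}\Delta_L(t)$ up to a sign convention that I would verify against Dowlin's normalization for $\overline{\HFK}_n$. Substituting $t = e^{-2\pi i/n}$ and rewriting $(-t^{1/2})^{\ell-1}$ at $t^{1/2} = -e^{-\pi i /n}$ produces the prefactor $e^{\pi i(1-\ell)/n}$ asserted in the theorem.

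Third, for unreduced $\HFK_n$ I would look for a chain-level decomposition of the shape $\HFK_n(L) \simeq \overline{\HFK}_n(L) \otimes W_n$, where $W_n$ is a small graded vector space (analogous to the ``extra basepoint'' factor in ordinary knot Floer theory, or to the quantum integer $[n]$ appearing when passing from reduced to unreduced $\mathfrak{sl}(n)$ invariants) whose fractional Euler characteristic vanishes at our chosen root of unity. Since $[n]_q|_{q = e^{\pi i/n}} = 0$, this would immediately force $\chi(\HFK_n(L)) = 0$ with no further geometric input.

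Finally, the $n=1$ case is degenerate (the grading is already integral and the root of unity is $-1$), so I would treat it by direct inspection, verifying that the constructions collapse so that $\overline{\HFK}_1$ and $\HFK_1$ each contribute Euler characteristic $1$ for every link. The main obstacle I anticipate is the second paragraph's bookkeeping: identifying the precise chain-level relationship between $\overline{\HFK}_n(L)$ and $\widehat{\HFK}(L)$ for multi-component links and tracking Dowlin's grading conventions carefully enough to produce $e^{\pi i(1-\ell)/n}$ on the nose. Similarly, locating or constructing the tensor factor $W_n$ in the unreduced case is where most of the actual work lies; once that is in hand, the root-of-unity vanishing of $[n]$ does the rest.
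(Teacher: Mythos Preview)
Your overall strategy---reduce to the known bigraded Euler characteristic of $\widehat{\HFK}(L)$ and then specialize---is the same as the paper's, and your handling of the $n=1$ case matches. But the second paragraph has a genuine gap for links with $\ell>1$, and this is precisely the step the paper works hardest on.

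The problem is that $\overline{\CFK}_n(L)$ does \emph{not} carry an $(M,A)$-bigrading once $\ell>1$: the defining relation $V_i = U_{a(i)}^{n-1}+\cdots+U_{b(i)}^{n-1}$ equates elements of different $(\gr_T,\gr_M)$-bidegrees (they agree only after collapsing to $\gr_n$), so there is no ``underlying $(M,A)$-bigraded complex computing $\overline{\HFK}_n(L)$'' whose Poincar\'e polynomial you can specialize. Worse, $\overline{\CFK}_n(L)$ is free over $\Q[U_1,\ldots,U_{\ell-1}]$ and hence infinite-dimensional over $\Q$, so a chain-level $\frac{\gr_n}{n}$-Euler characteristic is not even defined; one must work with the finite-dimensional homology $\overline{\HFK}_n(L)$, which is only singly graded and is \emph{not} isomorphic to $\widehat{\HFK}(L)$ in general (Dowlin's Lemma~2.20 gives this only for knots, as the paper notes). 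The paper bridges this with a Koszul-complex argument (Lemma~\ref{lem:NtoHat}): tensoring $\overline{\CFK}_n(L)$ over $R=\Q[U_1,\ldots,U_{\ell-1}]$ with $K=\bigotimes_i\bigl(R[1-\tfrac{2}{n}]\xrightarrow{U_i}R\bigr)$ and filtering by cube position yields a spectral sequence from the \emph{finite-dimensional} $E_1$ page $\overline{\HFK}_n(L)\otimes_R K$ to $\widehat{\HFK}(L)$. This gives
\[
\bigl(1-e^{2\pi i/n}\bigr)^{\ell-1}\,\chi\bigl(\overline{\HFK}_n(L)\bigr)=\chi\bigl(\widehat{\HFK}(L)\bigr),
\]
and dividing (here is where $n\ge 2$ is used) produces the stated formula. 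You correctly flagged this relationship as ``the main obstacle,'' but the proposal does not supply the idea that resolves it.

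For the unreduced case the paper does not hunt for a tensor factor $W_n$. It simply recalls that by definition $\HFK_n(L)$ is a grading shift of $\overline{\HFK}_n(L\sqcup\mathrm{unknot})$, applies the reduced formula just proved to this $(\ell+1)$-component split link, and uses $\Delta_{L\sqcup\mathrm{unknot}}(t)=0$. This is shorter than establishing a K\"unneth decomposition, and it avoids a subtlety in your route: the distinguished component in $\overline{\HFK}_n(L\sqcup\mathrm{unknot})$ is the unknot, not a component of $L$, so a splitting of the form $\overline{\HFK}_n(L)\otimes W_n$ is not immediate from the definitions.
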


We introduce grading-modified versions $\overline{\HFK}'_n(L)$ and $\HFK'_n(L)$ of Dowlin's invariants such that the Euler characteristic of $\overline{\HFK}'_n(L)$ equals $\Delta_L(t)|_{t^{1/2} = -e^{\pi i / n}}$. These $n$-dependent invariants are related to bigraded versions $\overline{\HFK}'(L)$ and $\HFK'(L)$ categorifying $\Delta_L(t)$ and zero respectively.\footnote{From the representation-theoretic perspective, $\overline{\HFK}'(L)$ categorifies the $U_q(\gl(1|1))$ invariant of a link with one component cut open to form a $(1,1)$-tangle, while $\HFK'(L)$ categorifies the $U_q(\gl(1|1))$ invariant of a closed link which is zero.}

We propose $\overline{\HFK}'_n(L)$ and $\HFK'_n(L)$ as the $E_{\infty}$ pages of Dowlin's conjectured spectral sequences, and support our proposal with Euler characteristic evidence. The $E_2$ pages of these conjectured spectral sequences should be reduced and unreduced $\mathfrak{sl}(n)$ homology with the bigrading collapsed to a single grading and divided by $n$ as above. By analogy with $\HFK_n$, we will refer\footnote{This usage of $\gr_n$ conflicts with the notation in \cite{rasmussen2006some}, where $\gr_n$ is used for what we call the quantum grading on $\mathfrak{sl}(n)$ homology (itself related in an $n$-dependent way to the quantum and horizontal gradings on HOMFLY-PT homology; see \cite{rasmussen2006some}).} to the single collapsed grading as $\gr_n$ and its quotient by $n$ as $\frac{\gr_n}{n}$.

\begin{theorem}\label{thm:IntroSLn}
Let $L$ be a link in $S^3$. The Euler characteristic of the $\frac{\gr_n}{n}$-graded reduced $\mathfrak{sl}(n)$ homology of $L$, in our sense, equals the reduced $\mathfrak{sl}(n)$ polynomial of $L$ evaluated at $q = e^{\pi i / n}$. The Euler characteristic of the $\frac{\gr_n}{n}$-graded unreduced $\mathfrak{sl}(n)$ homology of $L$ equals the unreduced $\mathfrak{sl}(n)$ polynomial of $L$ evaluated at $q = e^{\pi i / n}$.
\end{theorem}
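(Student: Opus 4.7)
The plan is to reduce the theorem to a direct computation relating the root-of-unity Euler characteristic of $\frac{\gr_n}{n}$-graded $\mathfrak{sl}(n)$ homology to the ordinary (signed) bigraded Euler characteristic of Khovanov--Rozansky homology, which by construction is the reduced (resp.\ unreduced) $\mathfrak{sl}(n)$ polynomial of $L$. First I would fix the grading convention: the collapsed single grading on $\mathfrak{sl}(n)$ homology is $\gr_n = n\cdot h + q$, where $h$ is the homological and $q$ is the Khovanov--Rozansky quantum grading. Since the KR differential shifts $(h,q)$ by $(1,0)$, this shifts $\gr_n$ by $n$, and after dividing by $n$ one obtains a $\frac{1}{n}\Z$-graded complex with degree-$1$ differential, precisely matching the setup of the root-of-unity Euler characteristic formalism used for Theorem~\ref{thm:IntroHFKn}.

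The main computation is then immediate. Unpacking the definition of the root-of-unity Euler characteristic on this $\frac{1}{n}\Z$-graded complex, it is the substitution $t = -1$ (with the branch $t^{1/n} = e^{\pi i/n}$) into the Poincaré polynomial $\sum_k \dim(H_k)\, t^k$. Rewriting in terms of the bigrading,
\[
\chi = \sum_{h,q} \dim(H^{h,q})\, e^{\pi i(h + q/n)} = \sum_{h,q} (-1)^h \bigl(e^{\pi i/n}\bigr)^q \dim(H^{h,q}),
\]
which is exactly the standard signed Poincaré polynomial of $\mathfrak{sl}(n)$ homology evaluated at $q = e^{\pi i/n}$, and hence equals the $\mathfrak{sl}(n)$ polynomial of $L$ at $q = e^{\pi i/n}$. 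The same argument handles the reduced and unreduced cases with no essential change.

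The hardest part is really bookkeeping rather than a conceptual difficulty: one must verify that the conventions for the quantum grading, the overall normalizations for reduced vs.\ unreduced $\mathfrak{sl}(n)$ homology, and the sign conventions in the Poincaré polynomial all line up with the normalization used for the $\mathfrak{sl}(n)$ polynomial on the right-hand side. One also needs that the root-of-unity Euler characteristic is well-defined on homology as a link invariant, but since Khovanov--Rozansky chain complexes are bounded with finite-dimensional pieces in each bidegree, the standard cancellation in contractible summands applies and the Euler characteristic descends from the chain complex to homology. In short, compared with Theorem~\ref{thm:IntroHFKn}, whose proof requires an actual identification of the root-of-unity Euler characteristic of $\overline{\HFK}_n$ or $\HFK_n$ with a nontrivial evaluation of the Alexander polynomial, Theorem~\ref{thm:IntroSLn} is essentially tautological given the general formalism and the Khovanov--Rozansky definition.
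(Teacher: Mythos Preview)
Your proposal is correct and follows essentially the same route as the paper: both arguments simply unpack the definition of the $\frac{1}{n}\Z$-graded Euler characteristic on $\mathfrak{sl}(n)$ homology with $\frac{\gr_n}{n} = \gr_H + \frac{1}{n}\gr_{Q,n}$, obtaining $\sum_{I,J}(-1)^J (e^{\pi i/n})^I \dim H^{I,J}$, which is by definition the $\mathfrak{sl}(n)$ polynomial evaluated at $q=e^{\pi i/n}$. Your remark that this theorem is essentially tautological compared to Theorem~\ref{thm:IntroHFKn} is also in line with the paper's treatment.
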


For $n \geq 2$ the polynomial evaluations appearing in Theorem~\ref{thm:IntroSLn} equal $\Delta_L(t)|_{t^{1/2} = - e^{\pi i / n}}$ and zero respectively; indeed, in the reduced case both evaluations are equal to $\overline{P}_L(-1,e^{\pi i / n})$ where $\overline{P}_L(a,q)$ is the reduced HOMFLY-PT polynomial of $L$, and similarly in the unreduced case. When $n = 1$ both evaluations are equal to $1$. As we discuss below, we can view Dowlin's conjectured spectral sequences as categorifications of these equalities.

We situate these results in the context of Rasmussen's spectral sequences from HOMFLY-PT homology to $\mathfrak{sl}(n)$ homology and the conjectured spectral sequences from HOMFLY-PT homology to $\HFK$, which fit with Dowlin's conjectured spectral sequences into a square as shown in \cite[Figure 1]{dowlin2018family}. We review the decategorified content of the known and conjectured spectral sequences starting at HOMFLY-PT homology, which we generalize to links in terms of our shifted gradings, and we add to Dowlin's square by labeling the edges with their decategorified content (see Figure~\ref{fig:SpectralSeqs}). Examining the decategorified content along the possible paths in the square, in terms of link polynomial evaluations, reveals a compatibility that could be a sign of a more structured relationship between these spectral sequences at the categorified level.

\begin{example}
Dowlin computes $\HFK_n$ of the unknot in \cite[Example 2.10]{dowlin2018family}; the result is $\Q[U]/(U^n)$, and the generator $1 \in \Q[U]$ has $\gr_n$ equal to $1 - n$ (so it has $\frac{\gr_n}{n}$ equal to $\frac{1}{n} - 1$). Dowlin writes that the graded Euler characteristic of this homology is $\frac{q^n - q^{-n}}{q - q^{-1}}$, agreeing with the $\mathfrak{sl}(n)$ polynomial of the unknot.

We propose that $\frac{q^n - q^{-n}}{q - q^{-1}}$ is the $\gr_n$-graded Poincar{\'e} polynomial of this homology group, where the coefficient of $q^i$ in the polynomial is the dimension of the homology in $\gr_n = i$. In fact, $\HFK_n$ of the unknot is isomorphic to unreduced $\mathfrak{sl}(n)$ homology of the unknot, which is naturally bigraded; in this example, the homological component of the bigrading is zero and the intrinsic component agrees with $\gr_n$. With respect to this bigrading, which makes sense on $\HFK_n$ of the unknot but not on $\HFK_n$ in general, it is indeed true that the graded Euler characteristic of $\HFK_n$ of the unknot is $\frac{q^n - q^{-n}}{q - q^{-1}}$.

However, here we are considering $\gr_n$ (divided by $n$), which makes sense on $\HFK_n$ of arbitrary links, as a homological grading. Since it is only a single grading, its Euler characteristic in our sense will be a single complex number (not necessarily an integer because the grading is fractional). A generator of the homology in $\gr_n = k$ will contribute a term $e^{k \pi i / n}$ to the Euler characteristic by our definitions; the Euler characteristic for the unknot homology is thus
\[
e^{\pi i (1/n - 1)} + e^{\pi i (3/n - 1)} + \cdots + e^{\pi i (1 - 3/n)} + e^{\pi i (1 - 1/n)}.
\]
For $n = 1$ we get $e^{\pi i (0)} = 1$; for $n \geq 2$ the sum is zero, since the roots of unity are distributed symmetrically around the origin.
\end{example}

\begin{remark}
Here we see chain complexes with gradings by $\frac{1}{n}\Z$ (and $d^2 = 0$) categorifying evaluations of $\mathfrak{sl}(n)$ polynomials at $2n^{th}$ roots of unity. For categorification of these polynomials at roots of unity $e^{2 \pi i / p}$ for $p$ prime, complexes with $d^2 = 0$ are no longer suitable, and one often works with $p$-complexes satisfying $d^p = 0$ (see e.g. \cite{KhHopfological,QiHopfological}). Combining these ideas, one could look for $p$-complexes with gradings by $\frac{1}{n}\Z$ categorifying evaluations of $\mathfrak{sl}(n)$ polynomials at $pn^{th}$ roots of unity, although we are not aware of such complexes in the literature.
\end{remark}

\begin{remark}
If one is only interested in categorifying e.g. $\overline{P}_{n,L}(e^{\pi i / n})$ using the ideas of this paper where $\overline{P}_{n,L}(q)$ is the $\mathfrak{sl}(n)$ polynomial, one does not need to use $\overline{\HFK}_n$; it suffices to take a grading-collapse of reduced $\mathfrak{sl}(n)$ homology. However, $\overline{\HFK}_n$ is a more natural or minimal categorification of this evaluation; analogously, to categorify the Alexander polynomial one can take a grading collapse of HOMFLY-PT homology, but $\overline{\HFK}$ is a more minimal way to do it.
\end{remark}

\begin{remark}
Let $K$ be a knot. For $n = 2$ where a spectral sequence from Khovanov homology to $\HFK$ has been constructed by Dowlin \cite{dowlin2018spectral}, the equality $\overline{P}_{2,K}(i) = \Delta_K(-1)$ is familiar (both evaluations give the knot determinant) and is a sign of a deeper relationship between the representation theory of $U_q(\gl(1|1))$ and $U_q(\gl(2))$ at $q = i$; see \cite[Section 1]{KauffmanSaleur}. We do not know whether there is any similar story one can tell about the analogous equalities for $n > 2$, although the $n = 2$ case is special at least in that both $\gl(1|1)$ and $\gl(2)$ are defined using $2 \times 2$ matrices.
\end{remark}

\subsection*{Organization}

In Section~\ref{sec:AlgPrelims} we define Euler characteristics for fractionally graded complexes and discuss spectral sequences. In Section~\ref{sec:LinkPolysKR} we review what we need about HOMFLY-PT polynomials, $\mathfrak{sl}(n)$ polynomials, and Alexander polynomials as well as HOMFLY-PT homology and $\mathfrak{sl}(n)$ homology (focusing on the gradings). In Section~\ref{sec:HFK} we do the same for $\HFK$ while introducing bigrading-shifted versions of $\HFK$ theories adapted to the three variants of HOMFLY-PT homology. In Section~\ref{sec:HFKnDefs} we recall the definitions of Dowlin's $\HFK_n$ invariants; in Section~\ref{sec:EulerChar} we compute their fractionally-graded Euler characteristics and introduce grading-shifted variants of $\HFK_n$. In Section~\ref{sec:SpectralSequences} we state a version of Dowlin's spectral sequence conjectures involving grading-shifted $\HFK_n$, compute its decategorified content, and place it in the context of spectral sequences from HOMFLY-PT homology to $\mathfrak{sl}(n)$ homology and $\HFK$.

\subsection*{Acknowledgments} 

We would like to thank Aaron Lauda for useful conversations and suggestions. A.M. was partially supported by NSF grant DMS-1902092 and Army Research Office W911NF2010075.

\section{Algebraic preliminaries}\label{sec:AlgPrelims}

\subsection{Euler characteristics of fractionally-graded complexes}

Following \cite{dowlin2018family}, we will work over $\Q$.

\begin{definition}
For $n \geq 1$, a $\frac{1}{n}\Z$-graded complex $C$ of $\Q$-vector spaces (or just a $\frac{1}{n}\Z$-graded complex for short) is a $\frac{1}{n}\Z$-graded $\Q$-vector space
\[
C = \bigoplus_{\alpha \in \frac{1}{n}\Z} C_{\alpha}
\]
equipped with a $\Q$-linear endomorphism $d$ of degree $+1$ satisfying $d^2 = 0$.
\end{definition}

\begin{remark}
A $\frac{1}{n}\Z$-graded complex is the same data as $n$ ordinary complexes, one for each element of $(\frac{1}{n}\Z)/\Z$. However, the examples of interest here more naturally give a $\frac{1}{n}\Z$-graded complex than $n$ ordinary complexes.
\end{remark}

The category of $\frac{1}{n}\Z$-graded complexes and homotopy classes of degree-zero chain maps is triangulated; the translation functor is degree shift downward by one. Furthermore, degree shift downward by $\frac{1}{n}$ equips this triangulated category with an $n^{th}$ root of its translation functor. We let $C[\alpha]$ denote $C$ with its degrees shifted downward by $\alpha$, so that $(C[\alpha])_{\alpha'} = C_{\alpha' + \alpha}$; the usual notation $[1]$ for the translation functor of a triangulated category agrees with our notation.

\begin{definition}\label{def:K0Fractional}
Let $\Cc$ be an (essentially small) triangulated category equipped with an $n^{th}$ root $\left[\frac{1}{n}\right]$ of its translation functor $[1]$. Let $\zeta_n = e^{\pi i / n}$. We define the Grothendieck group $K_0(\Cc)$ to be the quotient of the free $\Z[\zeta_n]$-module spanned by isomorphism classes of objects of $\Cc$ by the relations $X - Y + Z = 0$ for every distinguished triangle $X \to Y \to Z \to X[1]$ in $\Cc$, as well as 
\[
\left[X\left[\frac{1}{n}\right]\right] = \zeta_n^{-1}  \left[X\right]
\]
for all objects $X$ of $\Cc$.
\end{definition}

We can apply Definition~\ref{def:K0Fractional} to the homotopy category $H$ of finite-dimensional $\frac{1}{n}\Z$-graded complexes (a full triangulated subcategory of the homotopy category of all $\frac{1}{n}\Z$-graded complexes, preserved by the $n^{th}$ root of the translation functor). The result is a free $\Z[\zeta_n]$-module $K_0(H)$ of rank $1$ spanned by $[\Q]$, where $[\Q]$ denotes the class of the complex that has $\Q$ in degree zero and zero in all other degrees.

\begin{definition}
Let $C$ be a finite-dimensional $\frac{1}{n}\Z$-graded complex. The Euler characteristic $\chi(C)$ of $C$ is the unique element of $\Z[\zeta_n]$ such that $[C] = \chi(C) [\Q]$ in $K_0(H)$. Explicitly,
\[
\chi(C) = \sum_{\alpha \in \frac{1}{n}\Z} e^{\pi i \alpha} \dim_{\Q} C_{\alpha}.
\]
\end{definition}

Just as for ordinary Euler characteristics, we have $\chi(C) = \chi(H_*(C))$. We also recall the usual graded Euler characteristics for bigraded and triply-graded complexes.

\begin{definition}\label{def:BigradedEulerChar}
Let $C = \left( \{C_{I,J} : I,J \in \Z\}, d \right)$ be a bigraded chain complex which is finite-dimensional in each $I$-degree, such that $d$ has degree $(0,-1)$ or $(0,1)$. The graded Euler characteristic of $C$ is defined to be
\[
\chi_u(C) = \sum_{I,J \in \Z} (-1)^J u^I \dim_{\Q} (C_{I,J}),
\]
a formal Laurent series in a variable $u$. If the $I$-grading is valued in $\frac{1}{2}\Z$ rather than $\Z$, the same definition gives a formal Laurent series in $u^{1/2}$.

Similarly, let $C = \left( \{ C_{I,J,K} : I,J,K \in \Z \}, d \right)$ be a triply graded chain complex which is finite-dimensional in each $(I,J)$-bidegree, such that $d$ has degree $(0,0,-1)$ or $(0,0,1)$. The graded Euler characteristic of $C$ is defined to be
\[
\chi_{u,v}(C) = \sum_{I,J,K \in \Z} (-1)^K u^I v^J \dim_{\Q} (C_{I,J,K}),
\]
a formal Laurent series in variables $u$ and $v$. 
\end{definition}

\begin{remark}
Rather than $u$ and $v$, we will often use variable names corresponding to the gradings in question.
\end{remark}

\subsection{Spectral sequences}

\begin{definition}\label{def:SpectralSeq}
A spectral sequence of $\frac{1}{n}\Z$-graded complexes is a sequence of $\frac{1}{n}\Z$-graded complexes $(E_r,d_r)_{r \geq 0}$ together with isomorphisms $H_*(E_r,d_r) \cong E_{r+1}$ for $r \geq 0$.
\end{definition}

All spectral sequences in this paper have $d_r = 0$ for large enough $r$, so that for some $\frac{1}{n}\Z$-graded vector space $E_{\infty}$ we have $(E_r,d_r) = (E_{\infty},0)$ for large enough $r$.

\begin{remark}
It is built into the above definition that the differential $d_r$ on each page of the spectral sequence has degree $+1$ with respect to the $\frac{1}{n}\Z$ grading.
\end{remark}

Since $\chi(C) = \chi(H_*(C))$ for finite-dimensional $\frac{1}{n}\Z$-graded chain complexes, if some page $E_r$ of a spectral sequence as in Definition~\ref{def:SpectralSeq} is finite-dimensional then for any $r' \geq r$ we have $\chi(E_r) = \chi(E_{r'})$; in particular, $\chi(E_r) = \chi(E_{\infty})$. 

\begin{remark}\label{rem:BigradedSS}
Suppose one has a spectral sequence of bigraded complexes as in Definition~\ref{def:BigradedEulerChar}, such that each $d_r$ has bidegree $(0,-1)$ or $(0,1)$. Suppose that some page $E_r$ is finite-dimensional in each $j$-degree; the same is then true for each page $E_{r'}$ for $r \geq r$, and it follows for the same reason as above that $\chi_u(E_r) = \chi_u(E_{r'})$. In particular, $\chi_u(E_r) = \chi_u(E_{\infty})$. A similar equality for $\chi_{u,v}$ holds in the triply graded case, assuming each $d_r$ has bidegree $(0,0,-1)$ or $(0,0,1)$.
\end{remark}

\section{Link polynomials and Khovanov--Rozansky homology}\label{sec:LinkPolysKR}

All links below are assumed to be oriented.

\subsection{Link polynomials}\label{sec:LinkPolys}

The HOMFLY-PT polynomial $P_L(a,q)$ of a link $L$ in $S^3$ \cite{HOMFLY,PT} is defined by the skein relation
\[
a P_{L_+}(a,q) - a^{-1} P_{L_-}(a,q) = (q - q^{-1}) P_{L_0}(a,q)
\]
(where $L_+$, $L_-$, and $L_0$ are related near a crossing as in Figure~\ref{fig:SkeinRel}) together with the HOMFLY-PT polynomial of the unknot as a normalization. We consider three variants:
\begin{itemize}
\item The reduced HOMFLY-PT polynomial $\overline{P}_L(a,q)$ has $\overline{P}_{\unknot}(a,q) = 1$.
\item The middle HOMFLY-PT polynomial $P^-_L(a,q)$ has $P^-_{\unknot}(a,q) = \frac{-1}{q-q^{-1}}$.
\item The unreduced HOMFLY-PT polynomial $P_L(a,q)$ has $P_{\unknot}(a,q) = \frac{a - a^{-1}}{q - q^{-1}}$.
\end{itemize}

\begin{remark}
In these variables the middle and unreduced HOMFLY-PT ``polynomials'' are rational functions in general, although they are Laurent polynomials in $a$ and $z = q - q^{-1}$.
\end{remark}

\begin{figure}
\includegraphics[scale=0.6]{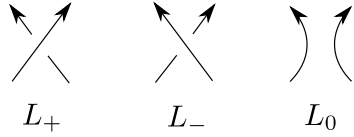}
\caption{Links appearing in the HOMFLY-PT skein relation.}
\label{fig:SkeinRel}
\end{figure}

We also consider three variants of the $\mathfrak{sl}(n)$ polynomial:
\begin{itemize}
\item The reduced $\mathfrak{sl}(n)$ polynomial is $\overline{P}_{n,L}(q) := \overline{P}_L(q^n,q)$.
\item The unreduced $\mathfrak{sl}(n)$ polynomial is $P_{n,L}(q) := P_L(q^n,q)$.
\end{itemize}

Finally, if we let $\Delta_L(t)$ denote the (symmetric single-variable) Alexander polynomial of $L$, a Laurent polynomial in $t^{1/2}$, then we have
\begin{itemize}
\item $\overline{P}_L(1,t^{1/2}) = \Delta_L(t)$,
\item $P^-_L(1,t^{1/2}) = \frac{\Delta_L(t)}{t^{-1/2} - t^{1/2}}$,
\item $P_L(1,t^{1/2}) = 0$.
\end{itemize}

More relevant for us will be the following identities, which are consequences of the symmetries $\overline{P}_L(a,q) = \overline{P}(-a,-q)$, $P^-_L(a,q) = -P^-_L(-a,-q)$, and $P_L(-a,-q) = P_L(a,q)$ of the HOMFLY-PT polynomials:
\begin{itemize}
\item $\overline{P}_L(-1,-t^{1/2}) = \Delta_L(t)$,
\item $P^-_L(-1,-t^{1/2}) = \frac{\Delta_L(t)}{t^{1/2} - t^{-1/2}}$,
\item $P_L(-1,-t^{-1/2}) = 0$.
\end{itemize} 

\subsection{Khovanov--Rozansky homology}

\subsubsection{Gradings and Euler characteristics}

We briefly establish notation for the $\mathfrak{sl}(n)$ homology and HOMFLY-PT homology of Khovanov--Rozansky \cite{KR1, KR2}; see also \cite{rasmussen2006some}. Let $\overline{H}(L)$, $H^-(L)$, and $H(L)$ be the reduced, middle, and unreduced HOMFLY-PT homology of a link $L$ in $S^3$. In the notation of \cite{rasmussen2006some}, these variants of HOMFLY-PT homology (denoted there by $\overline{H}(L)$, $H(L)$, and $\widetilde{H}(L)$ respectively) have a $\Z$-grading $\gr_q$ (or just $q$), a $\frac{1}{2}\Z$-grading $\gr_h$, and a $\frac{1}{2}\Z$-grading $\gr_v$. Rasmussen also writes $i = \gr_q$, $j = 2\gr_h$, and $k = 2\gr_v$; the value of $j-k$ is always even. We let
\begin{itemize}
\item $\gr_A = 2 \gr_h = j$,
\item $\gr_Q = \gr_q = i$,
\item $\gr_H = \gr_v - \gr_h = \frac{k-j}{2}$,
\end{itemize}
each of which is a grading by $\Z$ on the above three variants of HOMFLY-PT homology.

\begin{remark}
While $h$ in $\gr_h$ stands for horizontal, $H$ in $\gr_H$ stands for homological.
\end{remark}

Each variant of HOMFLY-PT homology is finite-dimensional in each $(\gr_A, \gr_Q)$-bidegree, so the following proposition makes sense.

\begin{proposition}[cf. Theorem 2.11, Section 2.8 of \cite{rasmussen2006some}]
For a link $L$ in $S^3$, we have:
\begin{itemize}
\item $\chi_{a,q}(\overline{H}(L)) = \overline{P}_L(a,q)$,
\item $\chi_{a,q}(H^-(L)) = P^-_L(a,q)$,
\item $\chi_{a,q}(H(L)) = P_L(a,q)$.
\end{itemize}
\end{proposition}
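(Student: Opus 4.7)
The proposition is essentially a translation between the notation fixed in this paper (the gradings $\gr_A$, $\gr_Q$, $\gr_H$ and the triply-graded Euler characteristic from Definition~\ref{def:BigradedEulerChar}) and the conventions used by Rasmussen. My plan is to unpack both sides and check they match, then cite the cited result from \cite{rasmussen2006some}.

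First I would expand $\chi_{a,q}$ on a triply-graded complex. Setting $(I,J,K)=(\gr_A,\gr_Q,\gr_H)$ and taking $u=a$, $v=q$, Definition~\ref{def:BigradedEulerChar} gives
\[
\chi_{a,q}(\overline{H}(L)) = \sum_{I,J,K} (-1)^{K} a^{I} q^{J} \dim_{\Q} \overline{H}(L)_{I,J,K},
\]
and similarly for $H^-(L)$ and $H(L)$. The assertion just above the proposition that each variant is finite-dimensional in each $(\gr_A,\gr_Q)$-bidegree is what makes these Euler characteristics well-defined as formal Laurent expressions.

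Next I would substitute the identifications $\gr_A = j$, $\gr_Q = i$, and $\gr_H = (k-j)/2$ recorded in the text. Since $(k-j)$ is even, $(-1)^{(k-j)/2}$ makes sense, and it agrees with $(-1)^{(j-k)/2}$ (the two exponents differ by a sign). Thus
\[
\chi_{a,q}(\overline{H}(L)) = \sum_{i,j,k} (-1)^{(j-k)/2} a^{j} q^{i} \dim_{\Q} \overline{H}^{i,j,k}(L),
\]
which is exactly the specialization at $t=-1$ of Rasmussen's graded Poincar\'e polynomial of reduced HOMFLY-PT homology. Rasmussen's Theorem 2.11 in \cite{rasmussen2006some} states that this specialization is the reduced HOMFLY-PT polynomial $\overline{P}_L(a,q)$, yielding the first bullet.

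For the remaining two bullets I would perform the same translation for $H^-(L)$ and $H(L)$ and invoke the discussion in Section 2.8 of \cite{rasmussen2006some}, which records the analogous identities for the middle and unreduced variants (with the understanding that the resulting Euler characteristics are Laurent series giving the rational functions $P^-_L(a,q)$ and $P_L(a,q)$, with the appropriate unknot normalization built into the definitions recalled in Section~\ref{sec:LinkPolys}). There is no real obstacle; the only content is bookkeeping of gradings and signs, and the main point of care is ensuring that the parity convention $(-1)^{\gr_H}$ in Definition~\ref{def:BigradedEulerChar} matches Rasmussen's $(-1)^{(j-k)/2}$, as verified above.
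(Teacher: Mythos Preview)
Your proposal is correct and is essentially what the paper does, except that the paper provides no proof at all: the proposition is stated with only the citation ``cf.\ Theorem 2.11, Section 2.8 of \cite{rasmussen2006some}'' and left at that. Your unpacking of $\chi_{a,q}$ via the grading dictionary $\gr_A=j$, $\gr_Q=i$, $\gr_H=(k-j)/2$ to recover $\sum_{i,j,k}(-1)^{(k-j)/2}a^jq^i\dim\overline{H}^{i,j,k}(L)$ is exactly the computation the paper later uses implicitly in Section~\ref{sec:HOMFLYtoHFK}, so your translation is consistent with the paper's conventions and correctly reduces the claim to the cited results of Rasmussen.
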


Now let $\overline{H}_n(L)$ and $H_n(L)$ be the reduced and unreduced $\mathfrak{sl}(n)$ homology of a link $L$ in $S^3$; the reduced homology $H_n(L)$ also depends on a choice of component of $L$. Both variants of $\mathfrak{sl}(n)$ homology have $\Z$-gradings $\gr_{Q, n}$ and $\gr_{H}$; in \cite[Section 2.9]{rasmussen2006some} these gradings are called $\gr_n$ and $\gr_v$ respectively, while in \cite[Section 5]{rasmussen2006some} they are called $\gr'_n$ and $\gr_-$ (see \cite[Proposition 5.14]{rasmussen2006some}).

\begin{proposition}[cf. Theorem 2.16 of \cite{rasmussen2006some}]
Write $\chi_q$ for the $\gr_{Q, n}$-graded Euler characteristic, with $\gr_H$ treated as the homological grading. For a link $L$ in $S^3$, we have:
\begin{itemize}
\item $\chi_q(\overline{H}_n(L)) = \overline{P}_{n,L}(q)$,
\item $\chi_q(H_n(L)) = P_{n,L}(q)$.
\end{itemize}
\end{proposition}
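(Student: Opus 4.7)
The plan is to reduce the statement to the preceding HOMFLY-PT Euler characteristic proposition, using Rasmussen's spectral sequence from HOMFLY-PT homology to $\mathfrak{sl}(n)$ homology \cite{rasmussen2006some} together with the invariance of graded Euler characteristics along spectral sequences (Remark~\ref{rem:BigradedSS}).

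First I would recall Rasmussen's construction: there are spectral sequences with $E_1$ pages $\overline{H}(L)$ and $H(L)$ converging to $\overline{H}_n(L)$ and $H_n(L)$ respectively, each page being a tri-graded complex with respect to $(\gr_A, \gr_Q, \gr_H)$ whose differentials are homogeneous of a fixed tri-degree. The key grading observation is that the linear combination $n\gr_A + \gr_Q$ is preserved by all differentials while $\gr_A$ and $\gr_Q$ individually are not, so on $E_\infty$ the $(\gr_A, \gr_Q)$ bigrading collapses to a single grading that one identifies with $\gr_{Q,n}$ on $\mathfrak{sl}(n)$ homology (up to Rasmussen's normalization), while $\gr_H$ persists as the homological grading.

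Next I would apply Remark~\ref{rem:BigradedSS} after specializing $a = q^n$ in the bigraded Poincar{\'e} polynomial; this substitution sends a generator in $(\gr_A, \gr_Q)$-bidegree $(J, I)$ to the monomial $q^{nJ + I}$, matching the collapsed $\gr_{Q,n}$ grading. Using finite-dimensionality in each $\gr_{Q,n}$-degree of every page (inherited from $\overline{H}(L)$ and $H(L)$), the Euler characteristic is preserved along the spectral sequence, yielding
\[
\chi_q(\overline{H}_n(L)) = \chi_{a,q}(\overline{H}(L))\big|_{a = q^n} = \overline{P}_L(q^n, q) = \overline{P}_{n,L}(q),
\]
and likewise in the unreduced case with $H$ and $P$ in place of $\overline{H}$ and $\overline{P}$.

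The main obstacle is grading bookkeeping: one must verify that the tri-degree of Rasmussen's differentials $d_r$ really does preserve $n\gr_A + \gr_Q$ (geometrically this is where the exponent $n$ enters, through the potential $x^{n+1}$ in the underlying matrix factorizations), and that the collapsed $E_\infty$ grading agrees with $\gr_{Q,n}$ as defined intrinsically in \cite[Section 2.9]{rasmussen2006some}. Both identifications are essentially contained in \cite[Proposition 5.14]{rasmussen2006some} and the surrounding discussion, but different sources normalize gradings slightly differently, so some care with sign and shift conventions is needed. Once the identifications are pinned down, the remaining argument is purely formal Euler-characteristic bookkeeping.
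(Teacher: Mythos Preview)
Your argument is correct and in fact coincides with what the paper does in Section~\ref{sec:RasmussenSeq}: there the authors record that Rasmussen's differentials have $\gr_{Q,n}=0$ and $\gr_H=1$, deduce $\chi_q(\overline{H}(L))=\chi_q(\overline{H}_n(L))$ (and likewise unreduced), and identify $\chi_q(\overline{H}(L))$ with $\overline{P}_L(q^n,q)$. The only discrepancy is that the paper does not present this as a \emph{proof} of the proposition---it simply cites \cite[Theorem~2.16]{rasmussen2006some} for the statement and then, in Section~\ref{sec:RasmussenSeq}, runs your spectral-sequence computation to exhibit the identity as the decategorified content of Rasmussen's sequence. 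Two small points: the paper places HOMFLY-PT homology on the $E_2$ page rather than $E_1$, and your finite-dimensionality claim in each $\gr_{Q,n}$-degree needs the (true) fact that HOMFLY-PT homology is supported in finitely many $\gr_A$-degrees, not just finite-dimensional in each $(\gr_A,\gr_Q)$-bidegree.
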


\subsubsection{Rasmussen's spectral sequences}\label{sec:RasmussenSeq}

In \cite{rasmussen2006some}, Rasmussen constructs spectral sequences with $E_2$ page $\overline{H}(L)$ (respectively $H(L)$) and $E_{\infty}$ page $\overline{H}_n(L)$ (respectively $H_n(L)$) for $n \geq 1$. As discussed in \cite[beginning of Section 5]{rasmussen2006some}, the differentials on each page have $\gr_{Q,n} = 0$ and $\gr_H = 1$ where $\gr_{Q,n}$ on HOMFLY-PT homology is defined by $\gr_{Q,n} = \gr_Q + n \gr_A$. Thus, each page gets a bigrading as the homology of the previous page, and the induced bigrading on the $E_{\infty}$ page agrees with $(\gr_{Q,n},\gr_H)$ on $\mathfrak{sl}(n)$ homology.

These spectral sequences give equalities of Euler characteristics $\chi_q(\overline{H}(L)) = \chi_q(\overline{H}_n(L))$ and $\chi_q(H(L)) = \chi_q(H_n(L))$, where we are viewing $\overline{H}(L)$ and $H(L)$ as bigraded by $(\gr_{Q,n}, \gr_H)$ and $\chi_q$ denotes the $\gr_{Q,n}$-graded Euler characteristic. As in the proof of \cite[Lemma 5.4]{rasmussen2006some}, we have $\chi_q(\overline{H}(L)) = \overline{P}_L(q^n, q)$; similarly, we have $\chi_q(H(L)) = P_L(q^n, q)$. Thus, applying Euler characteristics to these spectral sequences recovers the usual identity of the $\mathfrak{sl}(n)$ polynomial with an evaluation of the HOMFLY-PT polynomial. We view these identities as the ``decategorified content'' of Rasmussen's spectral sequences; in other words, we view the spectral sequences as categorifications of these identities.

\section{Knot Floer homology}\label{sec:HFK}

\subsection{The master complex}\label{sec:MasterComplex}

Let $L$ be a link in $S^3$; let $\Hc$ be a multi-pointed Heegaard diagram for $L$ (versions of the below theories can be defined for links in more general 3-manifolds but we restrict attention to links in $S^3$ here). Write 
\[
\{z_1,w_1,\ldots,z_m,w_m\}
\]
for the set of basepoints in $\Hc$. We assume that $\Hc$ is equipped with the appropriate analytic data such that the knot Floer homology ``master complex\footnote{While it would be more accurate to write $\CFK_{U,V}(\Hc)$, it will help avoid confusion with reduced and unreduced versions of knot Floer homology below to write $\CFK_{U,V}(L)$, with the $\Hc$ dependence left implicit.}'' $\CFK_{U,V}(L)$, a finitely generated bigraded free module over $\Q[U_1,V_1\ldots,U_m,V_m]$ with an endomorphism $\partial_{U,V}$ satisfying
\[
\partial_{U,V}^2 = \sum_{i=1}^m (U_{a(i)} - U_{b(i)}) V_i,
\]
is defined (here $a(i)$ denotes the index of the unique $w$ basepoint in the same component as $z_i$ of the Heegaard surface with alpha curves removed, and similarly for $b(i)$ and beta curves). See \cite{ZemkeFunctoriality, dowlin2018family} for more details on the master complex.

The two gradings on $\CFK_{U,V}(L)$ are called the Alexander grading $\gr_T$ (a grading by $\frac{1}{2}\Z$ in general) and the Maslov grading $\gr_M$ (a grading by $\Z$); our conventions for these gradings follow \cite{HFKOrig,OSzLinkInvts}. The variables $U_i$ have $\gr_T = -1$ and $\gr_M = -2$, the variables $V_i$ have $\gr_T = 1$ and $\gr_M = 0$, and $\partial_{U,V}$ has $\gr_T = 0$ and $\gr_M = -1$. 

In particular, to fix the absolute Maslov grading, one can work with a Heegaard diagram $\Hc$ whose number $m$ of $z$ and $w$ basepoints is equal to the number $\ell$ of components of $L$. Then the homology of
\[
\CFK_{U,V}(L) \otimes_{\Q[U_1,V_1,\ldots,U_{\ell},V_{\ell}]} \frac{\Q[U_1,V_1,\ldots,U_{\ell},V_{\ell}]}{(U_1, V_1 - 1, \ldots, V_{\ell} - 1)},
\]
which is a complex which $\partial^2 = 0$ with a single grading by $\gr_M$, computes $\widehat{\mathit{HF}}(S^3) \cong \Q$ (see the discussion after Theorem 4.4 of \cite{OSzLinkInvts}), while the homology of 
\[
\CFK_{U,V}(L) \otimes_{\Q[U_1,V_1,\ldots,U_{\ell},V_{\ell}]} \frac{\Q[U_1,V_1,\ldots,U_{\ell},V_{\ell}]}{(U_1, \ldots, U_{\ell}, V_1 - 1, \ldots, V_{\ell} - 1)}
\]
computes $\widehat{\mathit{HF}}(\#^{\ell - 1} (S^2 \times S^1)) \cong \wedge^*V$ where $V$ is a vector space of dimension $\ell - 1$. As mentioned in \cite[proof of Theorem 1.1]{OSzLinkInvts}, in the grading conventions of that paper the top-dimensional generator of $\wedge^*V$ corresponds to the generator of $\widehat{\mathit{HF}}(S^3) \cong \Q$. By the discussion after \cite[Theorem 1.2]{OSzLinkInvts}, the absolute Maslov grading on $\CFK$ is fixed so that the top-dimensional generator of $\wedge^* V$ has Maslov degree zero, so we can equivalently say that the generator of $\widehat{\mathit{HF}}(S^3) \cong \Q$ has degree zero. The absolute Alexander grading is fixed by symmetry.

\subsection{Other bigraded variants of \texorpdfstring{$\HFK$}{HFK}}

The following complexes are all derived from the master complex and satisfy $\partial^2 = 0$. We focus on bigraded versions of $\HFK$; there are also multi-graded versions as in \cite{OSzLinkInvts}. Let $L$ be an $\ell$-component link in $S^3$; when we mention knot Floer complexes for links, the dependence on a choice of Heegaard diagram for $L$ (say with basepoints $\{z_1,w_1,\ldots,z_m,w_m\}$) is implicit.

\begin{definition}\label{def:HFKTilde}
The bigraded complex $\widetilde{\CFK}(L)$ is
\[
\CFK_{U,V}(L) \otimes_{\Q[U_1,V_1,\ldots,U_m,V_m]} \Q.
\]
\end{definition}

\begin{definition}\label{def:HFKHat}
Assume that, in our Heegaard diagram $\Hc$ representing $L$, we are given some choice of basepoints $(z_{i_j},w_{i_j})$ on each component $L_j$ of $L$. The bigraded complex $\widehat{\CFK}(L)$ is 
\[
\CFK_{U,V}(L) \otimes_{\Q[U_1,V_1,\ldots,U_m,V_m]} \frac{\Q[U_1,V_1,\ldots,U_m,V_m]}{(V_1,\ldots,V_m,U_{i_1},\ldots,U_{i_\ell})}.
\]
\end{definition}

\begin{definition}\label{def:ReducedHFK}
Assume that $L$ is equipped with a distinguished component and that the basepoints $z_m, w_m$ of the Heegaard diagram $\Hc$ representing $L$ lie on the distinguished component of $L$. The bigraded complex $\overline{\CFK}(L)$ is
\[
\CFK_{U,V}(L) \otimes_{\Q[U_1,V_1,\ldots,U_m,V_m]} \frac{\Q[U_1,V_1,\ldots,U_m,V_m]}{(V_1,\ldots,V_m,U_m)}.
\]
\end{definition}

\begin{definition}\label{def:HFKMinus}
The bigraded complex $\CFK^-(L)$ is 
\[
\CFK_{U,V}(L) \otimes_{\Q[U_1,V_1,\ldots,U_m,V_m]} \frac{\Q[U_1,V_1,\ldots,U_m,V_m]}{(V_1,\ldots,V_m)}.
\]
\end{definition}

\begin{definition}\label{def:UnreducedHFK}[cf. Section 2.6 of \cite{dowlin2018family}]
Let $L'$ be the disjoint union of $L$ with a split unknot; we choose the unknot component as a distinguished component for $L'$, and we assume that the only basepoints of the diagram $\Hc'$ we choose to represent $L'$ that lie on the distinguished component of $L'$ are the final pair $(z_{m'},w_{m'})$ of basepoints. We define
\[
\CFK(L) := t^{-1/2} \overline{\CFK}(L')
\]
where $t^{-1/2}$ denotes a downward shift by $\frac{1}{2}$ in the Alexander grading $\gr_T$.
\end{definition}

\begin{remark}
The use of a split unknot to define unreduced $\HFK$ appears in Baldwin--Levine--Sarkar \cite{BaldwinLevineSarkar}, although these authors use the term ``unreduced $\HFK$'' for a slightly different theory.
\end{remark}

The homology of each of these complexes is an invariant of $L$ (equipped with a distinguished component in Definition~\ref{def:ReducedHFK}) and will be denoted by $\widetilde{\HFK}(L)$, $\widehat{\HFK}(L)$, etc. Each of the above bigraded versions of $\HFK$ is finite-dimensional in each Alexander degree.

\begin{remark}
The complex $\overline{\CFK}(L)$ and its homology appear to be less common in the literature; we use $\overline{(\cdot)}$ to match the notation of HOMFLY-PT homology and $\mathfrak{sl}(n)$ homology, although it is possible that our use of the notation $\overline{\HFK}(L)$ conflicts with uses of this notation elsewhere.
\end{remark}

\subsection{Graded Euler characteristics}

\begin{proposition}\label{prop:BigradedHFKEuler}
For a link $L$ in $S^3$, we have:
\begin{itemize}
\item $\chi_t(\widehat{\HFK}(L)) = (t^{-1/2} - t^{1/2})^{\ell - 1} \Delta_L(t) = (-1)^{\ell - 1} t^{\frac{\ell - 1}{2}} (1-t^{-1})^{\ell - 1} \Delta_L(t)$,
\item $\chi_t(\overline{\HFK}(L)) = (-1)^{\ell - 1} t^{\frac{\ell - 1}{2}} \Delta_L(t)$,
\item $\chi_t(\HFK^-(L)) = (-1)^{\ell - 1} t^{\frac{\ell - 1}{2}} \frac{\Delta_L(t)}{1-t^{-1}}$,
\item $\chi_t(\HFK(L)) = 0$.
\end{itemize}
\end{proposition}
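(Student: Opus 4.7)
The plan is to take the first formula as known input (it is Ozsv\'ath--Szab\'o's classical Euler characteristic calculation for $\widehat{\HFK}$, Theorem~1.3 of \cite{OSzLinkInvts}, applied with one pair of basepoints per component so that $m = \ell$ and then specialized $t_i \mapsto t$) and to derive the other three bullet points from it using short exact sequences of chain complexes induced by the $U$-variables, together with the fact that the symmetric single-variable Alexander polynomial of a split link vanishes.

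To execute the short-exact-sequence step, note that $\CFK_{U,V}(L)$ is free over $\Q[U_1, V_1, \ldots, U_m, V_m]$, so after killing all $V_i$ the complex $\CFK^-(L)$ is free over $\Q[U_1, \ldots, U_m]$ and each multiplication map $\cdot U_i$ is injective. Since $U_i$ has bidegree $(\gr_T, \gr_M) = (-1, -2)$, contributing a factor of $(-1)^{-2} t^{-1} = t^{-1}$ to Euler characteristic, the short exact sequence
\[
0 \to C \xrightarrow{\,\cdot U_i\,} C \to C/U_i \to 0
\]
yields $\chi_t(C/U_i) = (1 - t^{-1})\chi_t(C)$. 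Applying this once to pass from $\CFK^-(L)$ to $\overline{\CFK}(L) = \CFK^-(L)/U_m$, and $\ell - 1$ more times to reach $\widehat{\CFK}(L)$ by killing the remaining $U_{i_j}$, produces
\[
\chi_t(\widehat{\HFK}(L)) = (1 - t^{-1})^{\ell - 1} \chi_t(\overline{\HFK}(L)) = (1 - t^{-1})^{\ell} \chi_t(\HFK^-(L)).
\]
Combining these with the known $\widehat{\HFK}$ formula and the identity $(t^{-1/2} - t^{1/2}) = -t^{1/2}(1 - t^{-1})$ recovers the second and third bullet points. The fourth follows from the defining equation $\HFK(L) = t^{-1/2} \overline{\HFK}(L \sqcup \unknot)$: the $\overline{\HFK}$ formula applied to the $(\ell + 1)$-component split link $L \sqcup \unknot$ gives a Laurent-monomial multiple of $\Delta_{L \sqcup \unknot}(t) = 0$.

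The hard part will be ensuring the Euler characteristic manipulations are valid despite $\HFK^-(L)$ and $\overline{\HFK}(L)$ being infinite-dimensional as $\Q$-vector spaces (since the remaining $U_j$'s act freely): $\chi_t$ must be interpreted as a formal Laurent series in $t^{-1/2}$, and the multiplicative relations from the short exact sequences remain valid in this completion, with the final multiplication by $(1 - t^{-1})^{\ell - 1}$ restoring a genuine Laurent polynomial identity for $\widehat{\HFK}$. A secondary bookkeeping issue is handling Heegaard diagrams with $m > \ell$: extra basepoint pairs contribute extra $(1 - t^{-1})$ factors, which can either be absorbed using the standard basepoint-stabilization behavior of $\HFK$ or handled by restricting to the $m = \ell$ case.
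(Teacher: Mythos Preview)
Your proposal is correct and matches the paper's own proof: both take the $\widehat{\HFK}$ formula as the Ozsv\'ath--Szab\'o input, derive the $\overline{\HFK}$ and $\HFK^-$ formulas by exploiting that the chain groups differ by free polynomial variables $U_i$ of bidegree $(-1,-2)$ (the paper phrases this as multiplying by the Euler characteristic $\frac{1}{1-t^{-1}}$ of a polynomial ring, while you phrase it via the short exact sequence for $\cdot U_i$, which is the same computation), and obtain the $\HFK$ formula from the vanishing of $\Delta$ on split links. Your additional remarks about formal Laurent series and the $m>\ell$ case are sound bookkeeping that the paper leaves implicit.
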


\begin{proof}
The first claim follows from \cite[equation (1)]{HFKOrig} and \cite[Theorem 1.1]{OSzLinkInvts}.\footnote{Note that \cite[equation (1)]{OSzLinkInvts}, when proved in \cite[Proposition 9.1]{OSzLinkInvts}, is stated with a $\pm$ sign.} The second claim follows from the first because the chain groups in $\overline{\CFK}(L)$ are free modules over polynomial rings in $\ell - 1$ more variables than the corresponding chain groups in $\widehat{\CFK}(L)$; the graded Euler characteristic of a polynomial ring in one of these variables (with degrees $A = -1$ and $M = -2$) is $\frac{1}{1-t^{-1}}$. The third claim follows similarly; the fourth claim follows from the second claim along with the fact that $\Delta_L(t)$ vanishes on split links.
\end{proof}

\begin{remark}
For an $\ell$-component link $L$ in $S^3$, the single-variable and multi-variable Alexander polynomials of $L$ satisfy the relation $\Delta_L(t) \doteq \begin{cases} \Delta_L^{\multi}(t) & \ell = 1 \\ \Delta_L^{\multi}(t,\ldots,t)(1-t) & \ell > 1 \end{cases}$, where $\doteq$ means equality up to multiplication by a unit in $\Z[t,t^{-1}]$ (see \cite[Proposition 7.3.10(1)]{Kawauchi}). This relation explains why, unlike in \cite[equations (1) and (2)]{OSzLinkInvts}, we do not need to treat $\ell = 1$ and $\ell > 1$ separately in Proposition~\ref{prop:BigradedHFKEuler}.
\end{remark}

\begin{definition}\label{def:GradingShiftedHFK}
To more closely match the three variants of HOMFLY-PT homology, we introduce grading-shifted variants $\overline{\HFK}'(L)$, $(\HFK^-)'(L)$, and $\HFK'(L)$ of knot Floer homology. We first replace the Alexander and Maslov degrees of $\overline{\HFK}(L)$, $\HFK^-(L)$, and $\HFK(L)$ by their negatives, so that the variable $U_i$ now has $\gr_A = 1$ and $\gr_M = 2$ and the differentials on $\CFK$ complexes now have Maslov degree $+1$. In the Euler characteristic computations of Proposition~\ref{prop:BigradedHFKEuler}, $t$ gets replaced by $t^{-1}$; note that $\Delta_L(t^{-1}) = (-1)^{\ell - 1}\Delta_L(t)$. We then make the following shifts:
\begin{itemize}
\item For $\overline{\HFK}'(L)$, we shift the Alexander grading on grading-reversed $\overline{\HFK}(L)$ upward by $\frac{\ell - 1}{2}$. We have 
\[
\chi_t(\overline{\HFK}'(t)) = \Delta_L(t).
\]
\item For $(\HFK^-)'(L)$, we shift the Alexander grading on grading-reversed $\HFK^-(L)$ upward by $\frac{\ell}{2}$; we also shift the Maslov grading upward by $1$. We have 
\[
\chi_t((\HFK^-)'(L)) = \frac{\Delta_L(t)}{t^{1/2} - t^{-1/2}}.
\]
\item For $\HFK'(L)$, we shift the Alexander grading on grading-reversed $\HFK(L)$ upward by $\frac{\ell - 1}{2}$ (recall that $\HFK(L)$ already had an Alexander grading shift in Definition~\ref{def:UnreducedHFK}). We have $\chi_t(\HFK'(L)) = 0$.
\end{itemize}
\end{definition}

\subsection{Conjectured spectral sequences from HOMFLY-PT homology to \texorpdfstring{$\HFK$}{HFK}}\label{sec:HOMFLYtoHFK}

In \cite{dunfield2006superpolynomial}, Dunfield--Gukov--Rasmussen conjectured the existence of spectral sequences from $\overline{H}(K)$ to $\widehat{\HFK}(K)$ for knots $K$ in $S^3$. Manolescu \cite{ManolescuCube} gives a similar conjecture for $\HFK^-$ and the middle HOMFLY-PT homology. Dowlin \cite{dowlin2018family} conjectures spectral sequences from $\overline{H}(L)$ to $\widehat{\HFK}(L)$ and from $H(L)$ to $\HFK(L)$ for all links in $S^3$. We believe a spectral sequence involving $\overline{\HFK}(L)$ is more plausible in the reduced case for links, so we will state the following version of these spectral sequence conjectures.

\begin{conjecture}
Let $L$ be a link in $S^3$. Ignoring gradings at first, there are spectral sequences with:
\begin{itemize}
\item $E_2$ page $\overline{H}(L)$ and $E_{\infty}$ page $\overline{\HFK}(L)$;
\item $E_2$ page $H^-(L)$ and $E_{\infty}$ page $\HFK^-(L)$;
\item $E_2$ page $H(L)$ and $E_{\infty}$ page $\HFK(L)$.
\end{itemize}
Moreover, such sequences are given by the construction of Manolescu \cite[Theorem 1.1]{ManolescuCube}, which is known to give $E_{\infty}$ pages recovering $\HFK$.
\end{conjecture}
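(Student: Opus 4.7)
The plan is to build directly on Manolescu's cube-of-resolutions construction from \cite{ManolescuCube}, which already produces a chain complex whose total homology is a variant of $\HFK(L)$ and which comes equipped with a natural cube filtration by the number of singularized crossings. The whole content of the conjecture thus reduces to identifying the $E_2$ page of the induced spectral sequence with HOMFLY-PT homology in each of the three flavors (reduced, middle, unreduced).

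First, for a fixed link diagram $D$ of $L$, I would unpack Manolescu's complex: its vertices are indexed by subsets $I$ of the crossings of $D$, where a vertex labeled by $I$ corresponds to the complete resolution of $D$ in which the crossings in $I$ are singularized and the rest are oriented-resolved. At each vertex, the associated local chain complex has an internal differential whose homology should correspond to a local factor in the Khovanov--Rozansky construction of HOMFLY-PT homology (via matrix factorizations or Soergel bimodules). Filtering by cube degree yields a spectral sequence whose $E_1$ page is the cube of these vertex homologies with $d_1$ induced by the zip/unzip edge maps, and whose $E_\infty$ page is $\HFK$ by Manolescu's result.

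The second step is to verify that $(E_1, d_1)$ agrees with the Khovanov--Rozansky cube computing HOMFLY-PT homology; taking $d_1$-homology would then produce the desired $E_2$ page. The three variants should arise from different choices of basepoint conditions imposed on the master complex $\CFK_{U,V}$ before Manolescu's construction is applied: for $\overline{\HFK}(L)$ one sets all $V_i = 0$ and also kills $U_m$ on the distinguished component as in Definition~\ref{def:ReducedHFK}, for $\HFK^-(L)$ one keeps the polynomial $U_i$-action after setting the $V_i = 0$ as in Definition~\ref{def:HFKMinus}, and for $\HFK(L)$ one applies the split-unknot shift of Definition~\ref{def:UnreducedHFK}. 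One should then cross-check that the resulting candidate $E_2$ pages carry the correct graded Euler characteristics as listed in Proposition~\ref{prop:BigradedHFKEuler}, which agree with the relevant HOMFLY-PT evaluations from Section~\ref{sec:LinkPolys}.

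The main obstacle is matching the cube differential $d_1$ on Manolescu's side with the edge maps in the Khovanov--Rozansky cube; while the vertex-by-vertex homology identifications are local computations in matrix factorizations or Soergel bimodules and should be tractable, verifying compatibility across all cube edges requires an explicit comparison of the underlying algebraic models. The middle variant is especially delicate: both $H^-(\unknot)$ and $\HFK^-(\unknot)$ are infinitely generated over $\Q$, so the spectral sequence must be formulated with a completion or localization (consistent with the fact that $\chi_t(\HFK^-(\unknot)) = \frac{1}{1-t^{-1}}$) in order for the identification to make sense. Finally, because the conjecture is stated only up to gradings, tracking the bigrading correspondence precisely — matching $(\gr_A,\gr_Q,\gr_H)$ on HOMFLY-PT homology to $(\gr_T,\gr_M)$ on $\HFK$ — would be necessary for any refined version of the result and is likely to require an independent analysis of grading shifts through each edge map in the cube.
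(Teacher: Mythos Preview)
The statement you are addressing is a \emph{conjecture}, and the paper does not supply a proof of it; there is no ``paper's own proof'' to compare against. What the paper does provide is the remark immediately following the conjecture, which records the current state of the problem: Dowlin \cite{DowlinSingular} has identified the $E_1$ page of Manolescu's spectral sequence with the sum of HOMFLY-PT complexes for singular resolutions of $L$, while the identification of the $E_2$ page with HOMFLY-PT homology of $L$ itself remains open.

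Your outline is not a proof but a plan, and as such it is broadly aligned with this remark. You correctly locate the heart of the matter in the $d_1$ edge maps: the vertex-by-vertex $E_1$ identification is known (this is Dowlin's result in \cite{DowlinSingular}, which you should cite rather than re-derive), and the genuine open step is precisely the one you flag as the ``main obstacle,'' namely showing that Manolescu's cube-edge maps induce the Khovanov--Rozansky edge maps on $E_1$ so that the $E_2$ page is HOMFLY-PT homology. Your proposal does not actually carry out this step, so it remains a sketch of the expected strategy rather than an argument.

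One correction: the Euler-characteristic cross-check you propose is not evidence for the conjecture in the way you suggest. The graded Euler characteristics of the three HOMFLY-PT homologies and of the three $\HFK$ variants (after the grading shifts of Definition~\ref{def:GradingShiftedHFK}) are already known to agree via the polynomial identities at the end of Section~\ref{sec:LinkPolys}; this equality holds independently of whether the spectral sequence exists and does not help identify $(E_1,d_1)$ with the Khovanov--Rozansky cube.
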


\begin{remark}
In \cite{DowlinSingular}, Dowlin identifies the $E_1$ page of the spectral sequence from \cite[Theorem 1.1]{ManolescuCube} with the appropriate sum of HOMFLY-PT complexes for singular resolutions of $L$; it remains to identify the $E_2$ page with HOMFLY-PT homology for links with nonsingular crossings.
\end{remark}

Manolescu \cite[Section 4]{ManolescuCube} discusses the grading properties of his conjectured spectral sequences from HOMFLY-PT homology to $\HFK$ in detail; we will rephrase some of his discussion in terms of the grading-shifted variants of $\HFK$ from Definition~\ref{def:GradingShiftedHFK}. We define a $\frac{1}{2}\Z$-grading $\gr_T$ and a $\Z$-grading $\gr_M$ on each of the variants of HOMFLY-PT homology by 
\begin{itemize}
\item $\gr_T = \frac{\gr_Q}{2}$,
\item $\gr_M = \gr_A + \gr_Q + \gr_H$
\end{itemize}
(Manolescu includes constant grading-shift terms in the above formulas but here we incorporate the grading shifts into $\HFK$; he also has negative signs since, unlike us, he has not multiplied the Alexander and Maslov gradings on $\HFK$ by $-1$). The differential $d_r$ on the $E_r$ page of Manolescu's conjectured sequences has $\gr_A = 2-2r$, $\gr_Q = 0$, and $\gr_H = 2r-1$. Thus, $d_r$ has $\gr_T = 0$ and $\gr_M = 1$. Writing $\overline{H}(L) = \oplus_{i,j,k \in \Z} \overline{H}^{i,j,k}(L)$ as in \cite{rasmussen2006some} (and similarly for the other versions), we equivalently have $\gr_T = i/2$ and $\gr_M = i+j/2+k/2$. 

\begin{conjecture}
Let $L$ be a link in $S^3$. There are spectral sequences with each page bigraded by $(\gr_T, \gr_M)$, such that the differential on each page has $(\gr_T, \gr_M) = (0,1)$ and each page is the bigraded homology of the previous page, and with
\begin{itemize}
\item $E_2$ page $\overline{H}(L)$ and $E_{\infty}$ page $\overline{\HFK}'(L)$;
\item $E_2$ page $H^-(L)$ and $E_{\infty}$ page $(\HFK^-)'(L)$;
\item $E_2$ page $H(L)$ and $E_{\infty}$ page $\HFK'(L)$
\end{itemize}
as bigraded vector spaces. 
\end{conjecture}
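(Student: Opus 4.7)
The plan is to treat the bigraded statement as a grading refinement of the ungraded conjecture preceding it, and to reduce it to a direct bookkeeping check within Manolescu's cube-of-resolutions construction. Since the existence of the spectral sequence is itself conjectural, and this paper does not attempt to establish it, the work that remains is to verify that the $(\gr_T, \gr_M)$-bigrading on the $E_2$ page descends to every page and that the induced bigrading on $E_\infty$ agrees with that of the respective shifted $\HFK$-variant.

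First, I would check the degree of the differential on each page. Substituting Manolescu's tridegree data $(\gr_A, \gr_Q, \gr_H) \mapsto (\gr_A + 2 - 2r,\, \gr_Q,\, \gr_H + 2r - 1)$ for $d_r$ into $\gr_T = \gr_Q / 2$ and $\gr_M = \gr_A + \gr_Q + \gr_H$ yields $\gr_T(d_r) = 0$ and $\gr_M(d_r) = (2 - 2r) + 0 + (2r - 1) = 1$, independent of $r$. Hence each page inherits a bigrading under homology, and the $E_\infty$ page acquires a canonical $(\gr_T, \gr_M)$-bigrading.

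Next, I would match this induced bigrading with the explicit shifts of $\overline{\HFK}'$, $(\HFK^-)'$, and $\HFK'$ from Definition~\ref{def:GradingShiftedHFK}. The shifts were chosen so that the graded Euler characteristics of Proposition~\ref{prop:BigradedHFKEuler} become (up to the factor $t^{1/2} - t^{-1/2}$ in the middle case) the Alexander polynomial itself, consistent with the HOMFLY-PT evaluations of Section~\ref{sec:LinkPolys} under the formulas $\gr_T = \gr_Q / 2$ and $\gr_M = \gr_A + \gr_Q + \gr_H$. A natural way to verify the match is to pin down absolute gradings on the unknot (and on split links, where the $\ell$-dependent constants are visible) and then propagate to arbitrary links using the functorial, cube-of-resolutions structure of Manolescu's construction; alternatively, the absolute Maslov normalization fixed by $\widehat{\mathit{HF}}(S^3) \cong \Q$ and $\widehat{\mathit{HF}}(\#^{\ell-1}(S^2 \times S^1))$, as recalled in Section~\ref{sec:MasterComplex}, provides a direct anchor.

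The main obstacle is the underlying ungraded conjecture itself: without the identification of the $E_2$ page with HOMFLY-PT homology (which Dowlin \cite{DowlinSingular} reduces to an identification at the $E_1$-page level for singular resolutions), the bigraded refinement is equally out of reach. A secondary but still delicate difficulty is the translation of conventions --- we have negated the Alexander and Maslov gradings on $\HFK$ from Manolescu's formulation, and the additive shifts by $(\ell - 1)/2$ or $\ell/2$ that we fold into $\HFK'$, $(\HFK^-)'$, and $\overline{\HFK}'$ absorb constant terms that appear as explicit grading shifts on Manolescu's side. Tracking these constants cleanly across all three variants, for links of arbitrarily many components, will be the most error-prone part of the verification.
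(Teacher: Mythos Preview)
The statement is a \emph{conjecture}, and the paper does not attempt to prove it. What the paper does is exactly the bidegree check you perform in your first substantive paragraph: it computes $\gr_T(d_r) = 0$ and $\gr_M(d_r) = 1$ from Manolescu's tridegrees $(\gr_A,\gr_Q,\gr_H) = (2-2r,0,2r-1)$, and this computation appears in the text immediately \emph{before} the conjecture is stated, as motivation for why the bigraded refinement is reasonable to posit. The paper then moves on to derive the Euler-characteristic consequences the conjecture would have, without claiming any further progress toward the conjecture itself.

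Your proposal is therefore accurate in spirit: you correctly identify that the existence of the spectral sequence (in particular the $E_2$ identification with HOMFLY-PT homology) is the open problem, and that the grading bookkeeping is the only part one can actually carry out. Your second paragraph, on matching the absolute bigradings on $E_\infty$ with the shifted $\HFK'$ variants, goes beyond what the paper does; the paper simply \emph{defines} the shifts in Definition~\ref{def:GradingShiftedHFK} so that the Euler characteristics line up, and leaves the conjectural identification of $E_\infty$ with these shifted theories as part of the conjecture rather than something to be verified. So there is no ``paper's own proof'' to compare against here---your recognition that the core statement remains open is the correct assessment.
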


These spectral sequences would give equalities of Euler characteristics 
\[
\chi_t(\overline{H}(L)) = \chi_t(\overline{\HFK}'(L)), \quad \chi_t(H^-(L)) = \chi_t((\HFK^-)'(L)), \quad \chi_t(H(L)) = \chi_t(\HFK'(L))
\]
where $\chi_T$ denotes the $\gr_T$-graded Euler characteristic. We have
\begin{align*}
\chi_t(\overline{H}(L)) &= \sum_{I \in \frac{1}{2}\Z, J \in \Z} (-1)^J t^I \dim_{\Q} \left( \overline{H}(L)_{\gr_T = I, \gr_M = J} \right) \\
&= \sum_{i,j,k \in \Z} (-1)^{i+j/2+k/2}t^{i/2} \dim_{\Q} \left( \overline{H}^{i,j,k}(L) \right) \\
&= \left( \sum_{i,j,k \in \Z} a^j q^i (-1)^{(k-j)/2} \dim_{\Q} \left( \overline{H}^{i,j,k}(L) \right)\right)\bigg|_{a=-1, \,\, q=-t^{1/2}} \\
&= \overline{P}_L(-1,-t^{1/2}).
\end{align*}
Similarly,
\[
\chi_t(H^-(L)) = P^-_L(-1,-t^{1/2}), \qquad \chi_t(H(L)) = P_L(-1,-t^{1/2}) = 0.
\]
Thus, these conjectured spectral sequences can be viewed as categorifications of the three equalities involving Alexander polynomials and HOMFLY-PT polynomial evaluations (with $a = -1$) at the end of Section~\ref{sec:LinkPolys}.

\section{Dowlin's \texorpdfstring{$\HFK_n$}{HFKn} invariants}\label{sec:HFKnDefs}

We now consider two versions of $\HFK$ defined by Dowlin \cite{dowlin2018family}, applied to links in $S^3$ rather than more general 3-manifolds. Rather than bigradings, these versions will have single gradings by $\frac{1}{n}\Z$ in our conventions. 

Let $L$ be a link in $S^3$, represented by a Heegaard diagram $\Hc$ as in Section~\ref{sec:MasterComplex} with basepoints $\{z_1, w_1, \ldots, z_m, w_m\}$. Following Dowlin, for $n \geq 1$ we consider a collapse $\gr_n$ of the bigrading on $\CFK_{U,V}(L)$ defined by
\[
\gr_n = -n \gr_M + 2(n-1) \gr_T.
\]
We divide $\gr_n$ by $n$ to get
\[
\frac{\gr_n}{n} = -\gr_M + 2\left(1-\frac{1}{n}\right)\gr_T
\]
which is valued in $\frac{1}{n}\Z$ even for half-integral values of $\gr_T$. The variables $U_i$ have $\frac{\gr_n}{n} = \frac{2}{n}$,  the variables $V_i$ have $\frac{\gr_n}{n} = 2 - \frac{2}{n}$, and $\partial_{U,V}$ has $\frac{\gr_n}{n} = 1$.

\begin{definition}[cf. Definition 2.19 of \cite{dowlin2018family}]
Assume that $L$ is equipped with a distinguished component and that the basepoints $z_m$, $w_m$ of the Heegaard diagram $\Hc$ representing $L$ lie on the distinguished component. The $\frac{1}{n}\Z$-graded complex $\overline{\CFK}_n(L)$ is 
\[
\CFK_{U,V}(L) \otimes_{\Q[U_1,\ldots,U_m,V_1,\ldots,V_m]} \frac{\Q[U_1,\ldots,U_m,V_1,\ldots,V_m]}{\left(V_i - \frac{U_{a(i)}^n - U_{b(i)}^n}{U_{a(i)} - U_{b(i)}} : 1 \leq i \leq m-1 \right) + (U_m,V_m)}
\]
where $a(i)$ and $b(i)$ are defined as in Section~\ref{sec:MasterComplex}. The grading is given by $\frac{\gr_n}{n}$; note that $\frac{U_{a(i)}^n - U_{b(i)}^n}{U_{a(i)} - U_{b(i)}}$ equals the telescoping sum $U_{a(i)}^{n-1} + U_{a(i)}^{n-2} U_{b(i)} + \cdots + U_{b(i)}^{n-1}$, which (like $V_i$) has $\frac{\gr_n}{n} = 2 - \frac{2}{n}$.
\end{definition}

\begin{definition}[cf. Definition 2.5 of \cite{dowlin2018family}]
Let $L'$ be the disjoint union of $L$ with a split unknot, and choose the unknot component to be distinguished. As in Definition~\ref{def:UnreducedHFK}, we assume that the only basepoints of the diagram $\Hc'$ we choose to represent $L'$ that lie on the distinguished component of $L'$ are the final pair $(z_{m'},w_{m'})$ of basepoints. We define
\[
\CFK_n(L) := \overline{\CFK}_n(L')[1 - 1/n];
\]
note that a downward shift by $\frac{1}{2}$ in $\gr_T$ as in \cite[Section 2.2]{dowlin2018family} produces a downward shift by $1 - \frac{1}{n}$ in $\frac{\gr_n}{n} = -\gr_M + 2(1-1/n)\gr_T$.
\end{definition}

We write $\partial_n$ for the differential on either variant of $\CFK_n$; it satisfies $\partial_n^2 = 0$. When $n=1$, the complex $\overline{\CFK}_1(L)$ computes $\widehat{\mathit{HF}}(S^3)$ (see Section~\ref{sec:MasterComplex}), so its homology is $\Q$ in $\gr_M = 0$ (and thus $\frac{\gr_n}{n} = 0$) and zero in other degrees; see \cite[Lemma 5.2]{dowlin2018family}. It follows that $\HFK_1(L)$ is also $\Q$ in degree $0$ and zero in other degrees.

Since the tensor product (after annihilating the final pair of variables) sets each $V_i$ variable equal to a polynomial in the $U_i$ variables while imposing no further relations on the $U_i$ variables, the complexes $\overline{\CFK}_n(L)$ and $\CFK_n(L)$ are free over $\Q[U_1,\ldots,U_{m-1}]$ and $\Q[U_1,\ldots,U_{m'-1}]$ respectively. Their homology groups $\overline{\HFK}_n(L)$ (respectively $\HFK_n(L)$) depend only on $L$ with its distinguished component (respectively, $L$) and are finite-dimensional over $\Q$ as shown in \cite{dowlin2018family}.

\begin{remark}
In \cite{dowlin2018family}, Dowlin uses the notation $\widehat{\HFK}_n(L)$ to refer to what we call $\overline{\HFK}_n(L)$; however, in \cite{dowlin2018spectral}, $\widehat{\HFK}_n$ (at least for $n=2$) is given a different definition which is closer to Definition~\ref{def:HFKHat} for $\widehat{\HFK}(L)$.
\end{remark}

\section{The Euler characteristic of \texorpdfstring{$\HFK_n$}{HFKn}}\label{sec:EulerChar}

Let $L$ be an $\ell$-component link in $S^3$ equipped with a distinguished component; in this section we compute the Euler characteristics of $\overline{\HFK}_n(L)$ and $\HFK_n(L)$. 

For simplicity, assume we are working with a Heegaard diagram $\Hc$ for $L$ that has exactly $2\ell$ basepoints. Let $R = \Q[U_1, ..., U_{\ell-1}]$, a $\frac{1}{n}\Z$-graded ring where $U_i$ has degree $\frac{2}{n}$ as in Section~\ref{sec:HFKnDefs}, and let $K$ be the $\frac{1}{n}\Z$-graded Koszul complex
\[
K = \bigotimes_{i = 1}^{\ell-1} \left(R\left[1 - \frac{2}{n} \right] \xrightarrow{U_i} R \right)
\]
where the tensor products are over $R$.

\begin{lemma}\label{lem:NtoHat}
Let $n \geq 2$. There exists a spectral sequence with each page graded by $\frac{1}{n}\Z$, with differentials of degree $+1$ such that each page is the $\frac{1}{n}\Z$-graded homology of the previous page, and with $E_1$ page $\overline{\HFK}_n(L) \otimes_R K$ and $E_{\infty}$ page $\widehat{\HFK}(L)$ as $\frac{1}{n}\Z$-graded vector spaces.
\end{lemma}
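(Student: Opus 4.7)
My approach is to form the double complex $D = \overline{\CFK}_n(L) \otimes_R K$ and extract the claimed spectral sequence from its horizontal filtration, using a second filtration on $D$ to identify the abutment.

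First, I would observe that $K$ is the Koszul complex on the regular sequence $(U_1, \ldots, U_{\ell-1})$ in $R$, so it is a bounded $\frac{1}{n}\Z$-graded complex of free $R$-modules whose homology is $\Q = R/(U_1,\ldots,U_{\ell-1})$ concentrated in $\frac{1}{n}\Z$-degree zero; equivalently, $K \to \Q$ is a free resolution.

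Second, I would identify $\overline{\CFK}_n(L) \otimes_R \Q$ with $\widehat{\CFK}(L)$ as $\frac{1}{n}\Z$-graded complexes. Since $m = \ell$, the defining relations for $\overline{\CFK}_n(L)$ already kill $U_m$ and $V_m$ and replace each $V_i$ (for $i < m$) by the polynomial $U_{a(i)}^{n-1} + \cdots + U_{b(i)}^{n-1}$, which has no constant term once $n \geq 2$. Imposing additionally $U_1 = \cdots = U_{\ell-1} = 0$ therefore forces every remaining $V_i$ to zero, yielding
\[
\overline{\CFK}_n(L) \otimes_R \Q \,\cong\, \CFK_{U,V}(L)/(U_1,\ldots,U_m, V_1,\ldots,V_m) \,=\, \widehat{\CFK}(L),
\]
and the $\frac{\gr_n}{n}$-grading on both sides agrees.

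Third, view $D$ as a double complex with vertical differential $\partial_n \otimes 1$ and horizontal differential $1 \otimes d_K$ (with the usual sign convention), whose total $\frac{1}{n}\Z$-grading and total differential of degree $+1$ make $D$ a $\frac{1}{n}\Z$-graded complex. Filtering $D$ by horizontal position in $K$, and using that each $K^p$ is free (hence flat) over $R$, the resulting spectral sequence has
\[
E_1^{p,*} \,=\, H_*\bigl(\overline{\CFK}_n(L) \otimes_R K^p,\, \partial_n \otimes 1\bigr) \,=\, \overline{\HFK}_n(L) \otimes_R K^p,
\]
with induced differential $1 \otimes d_K$. Thus $E_1 = \overline{\HFK}_n(L) \otimes_R K$ as a $\frac{1}{n}\Z$-graded complex, the page claimed in the lemma. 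To identify the abutment, I would filter $D$ instead by vertical position; since $\overline{\CFK}_n(L)$ is free over $R$ by Dowlin, horizontal homology row-by-row gives $\overline{\CFK}_n(L) \otimes_R H_*(K) = \overline{\CFK}_n(L) \otimes_R \Q \cong \widehat{\CFK}(L)$, concentrated in a single column with induced vertical differential the standard one on $\widehat{\CFK}(L)$. This second spectral sequence collapses at $E_2 \cong \widehat{\HFK}(L)$, and since both spectral sequences converge to $H_*(\mathrm{Tot}(D))$, the first has $E_\infty \cong \widehat{\HFK}(L)$ as a $\frac{1}{n}\Z$-graded vector space.

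The real content is step two: the restriction $n \geq 2$ is essential there because for $n = 1$ the substitute for $V_i$ is the constant $1$, which would not vanish when the $U_i$ are zeroed out, and the argument would instead recover $\widehat{\mathit{HF}}(S^3)$ rather than $\widehat{\HFK}(L)$. Convergence of both spectral sequences is automatic from the boundedness of $K$ together with the finiteness of $\overline{\HFK}_n(L)$ and of the homology computed by the total complex.
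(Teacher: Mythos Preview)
Your proof is correct and follows essentially the same strategy as the paper: filter the total complex $D = \overline{\CFK}_n(L)\otimes_R K$ by Koszul (cube) position to obtain $E_1 = \overline{\HFK}_n(L)\otimes_R K$, and identify the abutment with $\widehat{\HFK}(L)$ using that $\overline{\CFK}_n(L)$ is free over $R$ and $K$ resolves $\Q$. The only difference is packaging of the abutment step: the paper exhibits directly a contractible subcomplex of $D$ with quotient $\widehat{\CFK}(L)$ (iterated cancellation along the injective maps $U_i$), whereas you invoke a second spectral sequence. Your phrase ``filter by vertical position'' is a little loose, since $\overline{\CFK}_n(L)$ carries no bounded cohomological index to play the role of rows; what you actually use is that freeness of $\overline{\CFK}_n(L)$ over $R$ makes $H_{d_K}(D)\cong \overline{\CFK}_n(L)\otimes_R \Q = \widehat{\CFK}(L)$, which is exactly the content of the paper's contractible-subcomplex observation.
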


\begin{proof}
The complex $\overline{\CFK}_n(L) \otimes_R K$ can be viewed as a cube of dimension $\ell - 1$ in which each vertex is a copy of $\overline{\CFK}_n(L)$. We equip $\overline{\CFK}_n(L) \otimes_R K$ with a filtration such that every oriented edge of this cube increases the filtration level by 1. Then the differential $d$ on $\overline{\CFK}_n(L) \otimes_R K$ can be decomposed as $d = d_0 + d_1$, where $d_0$ is the differential on each copy of $\overline{\CFK}_n(L)$ and $d_1$ comes from the differential on $K$. 

From this filtration, we get a spectral sequence whose $E_1$ page is $(\overline{\HFK}_n(L) \otimes_R K, (d_1)_*)$. The spectral sequence converges because there are only finitely many nontrivial filtration levels, and the $E_{\infty}$ page is $\widehat{\HFK}(L)$; indeed, the total complex $\overline{\CFK}_n(L) \otimes_R K$ has a contractible subcomplex such that the quotient by this subcomplex is $\widehat{\CFK}(L)$.
\end{proof}

Since $\overline{\HFK}_n(L)$ is finitely generated over $\Q$, the same holds for $\overline{\HFK}_n(L) \otimes_R K$. Thus, the spectral sequence of Lemma~\ref{lem:NtoHat} gives an equality between the $\frac{\gr_n}{n}$-graded Euler characteristics of $\overline{\HFK}_n(L) \otimes_R K$ and $\widehat{\HFK}(L)$. The Euler characteristics of $\overline{\HFK}_n(L) \otimes_R K$ and $\overline{\HFK}_n(L)$ are related by
\[
\chi(\overline{\HFK}_n(L)\otimes_R K) = \left(1 - e^{2 \pi i /n}\right)^{\ell - 1} \chi(\overline{\HFK}_n(L)),
\]
so since $n \geq 2$ we get
\[
\chi(\overline{\HFK}_n(L)) = \left(1 - e^{2\pi i /n}\right)^{1-\ell} \chi(\widehat{\HFK}(L)).
\]
Using Proposition~\ref{prop:BigradedHFKEuler}, we can compute the $\frac{\gr_n}{n}$-graded Euler characteristic of $\widehat{\HFK}(L)$ as follows:
\begin{align*}
\chi(\widehat{\HFK}(L)) &= \sum_{\alpha \in \frac{1}{n}\Z} e^{\pi i \alpha} \dim_{\Q} \left( \widehat{\HFK}(L)_{\frac{\gr_n}{n} = \alpha} \right) \\
&= \sum_{I \in \frac{1}{2}\Z, J \in \Z} e^{\pi i (-J + 2(1-1/n)I)} \dim_{\Q} \left( \widehat{\HFK}(L)_{\gr_T = I, \gr_M = J} \right) \\
&= \left( \sum_{I \in \frac{1}{2}\Z, J \in \Z} (-1)^J t^I \dim_{\Q} \left( \widehat{\HFK}(L)_{\gr_T = I, \gr_M = J} \right) \right) \Bigg|_{t^{1/2} = e^{\pi i (1 - 1/n)} = -e^{-\pi i / n}} \\
&= \left( (-1)^{\ell - 1} t^{\frac{\ell - 1}{2}} (1 - t^{-1})^{\ell - 1} \Delta_L(t) \right) \bigg|_{t^{1/2} = -e^{- \pi i / n}} \\
&=  e^{\pi i (1-\ell) / n} \left(1 - e^{2 \pi i / n}\right)^{\ell - 1} \Delta_L(t)|_{t^{1/2} = -e^{-\pi i / n}}.
\end{align*}

\begin{corollary}
For $n \geq 2$, the $\frac{\gr_n}{n}$-graded Euler characteristic of $\overline{\HFK}_n(L)$ is 
\[
e^{\pi i (1 - \ell) / n} \Delta_L(t)|_{t^{1/2} = - e^{- \pi i / n}}.
\]
\end{corollary}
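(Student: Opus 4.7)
The plan is to assemble the pieces already developed in this section into a one-line consequence. All the hard work is done by Lemma~\ref{lem:NtoHat} and Proposition~\ref{prop:BigradedHFKEuler}; what remains is to check that a certain factor can legitimately be divided through.

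First, I would apply Lemma~\ref{lem:NtoHat}. Because all pages of that spectral sequence are $\frac{1}{n}\Z$-graded complexes with differentials of degree $+1$ and the $E_1$ page is finite-dimensional (since $\overline{\HFK}_n(L)$ is finite-dimensional over $\Q$ and $K$ is a bounded complex of free $R$-modules of finite rank in each degree), the discussion after Definition~\ref{def:SpectralSeq} yields
\[
\chi\bigl(\overline{\HFK}_n(L) \otimes_R K\bigr) = \chi\bigl(\widehat{\HFK}(L)\bigr).
\]

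Next, I would compute each side separately. For the left side, the Koszul complex $K$ is a tensor product of $\ell-1$ two-term complexes $R[1 - 2/n] \xrightarrow{U_i} R$; each two-term factor has Euler characteristic $1 - e^{2\pi i/n}$ (using that $[1-2/n]$ shifts grading, so the left term contributes $e^{\pi i(2/n - 1)} = -e^{2\pi i/n - \pi i}\cdot(-1) = e^{2\pi i /n}$ with the sign from being in homological degree $0$ after the shift). Multiplicativity of Euler characteristics under tensor product then gives
\[
\chi\bigl(\overline{\HFK}_n(L) \otimes_R K\bigr) = \bigl(1 - e^{2\pi i / n}\bigr)^{\ell - 1} \chi\bigl(\overline{\HFK}_n(L)\bigr).
\]
For the right side, I would translate the $(\gr_T, \gr_M)$-graded Euler characteristic of Proposition~\ref{prop:BigradedHFKEuler} into the $\frac{\gr_n}{n}$-graded Euler characteristic by noting that a generator in bidegree $(I,J)$ contributes $e^{\pi i(-J + 2(1-1/n)I)}$, which is exactly the effect of the substitution $t^{1/2} = -e^{-\pi i/n}$ applied to $\chi_t(\widehat{\HFK}(L))$. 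Plugging this substitution into the formula $\chi_t(\widehat{\HFK}(L)) = (-1)^{\ell - 1} t^{(\ell-1)/2}(1 - t^{-1})^{\ell - 1} \Delta_L(t)$ yields
\[
\chi\bigl(\widehat{\HFK}(L)\bigr) = e^{\pi i(1-\ell)/n}\bigl(1 - e^{2\pi i/n}\bigr)^{\ell-1} \Delta_L(t)\big|_{t^{1/2} = -e^{-\pi i/n}}.
\]

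Finally, I would divide both equalities by $(1 - e^{2\pi i/n})^{\ell - 1}$ to isolate $\chi(\overline{\HFK}_n(L))$. The main (and only real) obstacle is that this division must be legal, i.e.\ $1 - e^{2\pi i / n} \neq 0$; this is exactly where the hypothesis $n \geq 2$ enters. The two factors of $(1 - e^{2\pi i/n})^{\ell - 1}$ cancel cleanly, leaving
\[
\chi\bigl(\overline{\HFK}_n(L)\bigr) = e^{\pi i(1-\ell)/n} \, \Delta_L(t)\big|_{t^{1/2} = -e^{-\pi i/n}},
\]
which is the claimed formula.
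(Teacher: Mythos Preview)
Your proposal is correct and follows essentially the same route as the paper: apply Lemma~\ref{lem:NtoHat} to equate the two Euler characteristics, compute $\chi(\overline{\HFK}_n(L)\otimes_R K)$ via the Koszul factor $(1-e^{2\pi i/n})^{\ell-1}$, evaluate $\chi(\widehat{\HFK}(L))$ by substituting $t^{1/2}=-e^{-\pi i/n}$ into Proposition~\ref{prop:BigradedHFKEuler}, and divide through using $n\geq 2$. The only cosmetic point is that your parenthetical justification of the factor $1-e^{2\pi i/n}$ is a bit tangled; it is cleaner to note directly that $\chi(C[\alpha])=e^{-\pi i\alpha}\chi(C)$, so each two-term factor contributes $1+e^{-\pi i(1-2/n)}=1-e^{2\pi i/n}$.
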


Since $\HFK_n(L)$ is defined as a grading shift of $\overline{\HFK}_n$ of the disjoint union of $L$ with a split unknot, and the Alexander polynomial vanishes on split links, we see that if $n \geq 2$, the $\frac{\gr_n}{n}$-graded Euler characteristic of $\HFK_n(L)$ is zero for all links $L$. When $n = 1$, we have
\[
\chi(\overline{\HFK}_1(L)) = \chi(\HFK_1(L)) = 1
\]
for all links $L$.

\begin{definition}\label{def:GradingShiftedHFKn}
As in Definition~\ref{def:GradingShiftedHFK}, we define grading-shifted variants $\overline{\HFK}'_n(L)$ and $\HFK'_n(L)$ of $\overline{\HFK}_n(L)$ and $\HFK_n(L)$. Starting with bigradings on $\overline{\CFK}_n(L)$ and $\CFK_n(L)$ corresponding to the bigradings on $\overline{\CFK}'(L)$ and $\CFK'(L)$, the differentials $\partial_n$ have degree $+1$ with respect to
\[
\frac{\gr_n}{n} := \gr_M - 2\left(1 - \frac{1}{n}\right) \gr_T
\]
(note that since we still want $+1$ differentials on $\frac{1}{n}\Z$-graded complexes, this is the negative of the earlier definition of $\frac{\gr_n}{n}$ in terms of $\gr_T$ and $\gr_M$). We define $\overline{\CFK}'_n(L)$ to be $\overline{\CFK}'(L)$ with grading given by $\frac{\gr_n}{n}$ and differential given by $\partial_n$; we define $\CFK'_n(L)$ similarly. 
\end{definition}

\begin{remark}
Starting from $\overline{\HFK}(L)$, we negated both gradings and shifted the Alexander grading upward by $\frac{\ell - 1}{2}$ to get $\overline{\HFK}'(L)$, then applied the collapse $\gr_M - 2(1-1/n)\gr_T$ to get the grading on $\overline{\HFK}'_n(L)$. Equivalently, we could first apply the collapse $-\gr_M + 2(1-1/n)\gr_T$ on $\overline{\HFK}(L)$ to get the grading on $\overline{\HFK}_n(L)$, then shift this $\frac{1}{n}\Z$ grading upward by $(1 - \ell)(1 - 1/n)$. In other words, $\overline{\HFK}'_n(L)$ is $\overline{\HFK}_n(L)$ with its $\frac{1}{n}\Z$-grading shifted upward by $(1 - \ell)(1 - 1/n)$; similarly, $\HFK'_n(L)$ is $\HFK_n(L)$ with its $\frac{1}{n}\Z$-grading shifted upward by $(1 - \ell)(1 - 1/n)$.
\end{remark}

\begin{corollary}
For $n \geq 2$, the $\frac{\gr_n}{n}$-graded Euler characteristic of $\overline{\HFK}'_n(L)$ is 
\[
\Delta_L(t)|_{t^{1/2} = -e^{\pi i / n}},
\]
and the $\frac{\gr_n}{n}$-graded Euler characteristic of $\HFK'_n(L)$ is zero.
\end{corollary}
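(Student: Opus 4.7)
The plan is to deduce this corollary directly from the preceding one by accounting for the $(1 - \ell)(1 - 1/n)$ grading shift relating $\overline{\HFK}'_n(L)$ to $\overline{\HFK}_n(L)$, and then applying the standard symmetry $\Delta_L(t^{-1}) = (-1)^{\ell-1}\Delta_L(t)$ of the Alexander polynomial to match the evaluation points.

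First I would record a general bookkeeping fact: if a $\frac{1}{n}\Z$-graded complex $C$ has its grading shifted upward by $\beta \in \frac{1}{n}\Z$, then by the definition $\chi(C) = \sum_\alpha e^{\pi i \alpha} \dim_\Q C_\alpha$ each generator at degree $\alpha$ now contributes $e^{\pi i(\alpha + \beta)}$ in place of $e^{\pi i \alpha}$, so the Euler characteristic is multiplied by $e^{\pi i \beta}$. Applying this with $\beta = (1-\ell)(1 - 1/n)$ to the formula of the previous corollary yields
\[
\chi(\overline{\HFK}'_n(L)) = e^{\pi i (1-\ell)(1 - 1/n)}\, e^{\pi i (1-\ell)/n}\, \Delta_L(t)\big|_{t^{1/2} = -e^{-\pi i/n}} = (-1)^{\ell - 1}\,\Delta_L(t)\big|_{t^{1/2} = -e^{-\pi i/n}},
\]
since the two exponential prefactors combine to $e^{\pi i(1 - \ell)} = (-1)^{\ell - 1}$.

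Next I would apply the Alexander symmetry. Taking $t^{1/2} = -e^{-\pi i/n}$ gives $t = e^{-2\pi i/n}$ and $t^{-1} = e^{2\pi i/n}$, whose natural square root in this context is $-e^{\pi i/n}$ (the multiplicative inverse of $-e^{-\pi i/n}$). Thus $\Delta_L(t^{-1}) = (-1)^{\ell - 1}\Delta_L(t)$ evaluated at these compatible choices rearranges to
\[
(-1)^{\ell - 1}\,\Delta_L(t)\big|_{t^{1/2} = -e^{-\pi i/n}} = \Delta_L(t)\big|_{t^{1/2} = -e^{\pi i/n}},
\]
which is the asserted formula for $\chi(\overline{\HFK}'_n(L))$.

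Finally, for $\HFK'_n(L)$ I would simply observe that the previous discussion already established $\chi(\HFK_n(L)) = 0$, using that $\HFK_n(L)$ is defined as a shift of $\overline{\HFK}_n$ of a split link (on which the Alexander polynomial vanishes); the further $(1-\ell)(1-1/n)$ grading shift defining $\HFK'_n(L)$ multiplies this zero by a unit and leaves the Euler characteristic zero. There is no substantive obstacle, only the bookkeeping of signs and root choices in the passage between $t^{1/2} = -e^{\pi i/n}$ and $t^{1/2} = -e^{-\pi i/n}$; the symmetry of the Alexander polynomial absorbs the leftover $(-1)^{\ell - 1}$ cleanly.
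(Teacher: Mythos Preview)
Your proof is correct and follows essentially the same approach as the paper: multiply the Euler characteristic from the previous corollary by the grading-shift factor $e^{\pi i (1-\ell)(1-1/n)}$, combine the exponentials to $(-1)^{\ell-1}$, and invoke the Alexander symmetry $\Delta_L(t^{-1}) = (-1)^{\ell-1}\Delta_L(t)$ to swap the evaluation point. The paper's version is slightly more terse, but the content is identical.
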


\begin{proof}
For the reduced case, we have
\[
e^{\pi i (1-\ell)(1-1/n)} e^{\pi i (1 - \ell) / n} \Delta_L(t)|_{t^{1/2} = -e^{-\pi i / n}} = (-1)^{1 - \ell} \Delta_L(t)|_{t^{1/2} = -e^{-\pi i / n}} = \Delta_L(t)|_{t^{1/2} = -e^{\pi i / n}}
\]
(using that $\Delta_L(t^{-1}) = (-1)^{\ell - 1}\Delta_L(t)$); for the unreduced case, we have $e^{\pi i (1-\ell)(1-1/n)} \cdot 0 = 0$.
\end{proof}

The proof of \cite[Lemma 2.23]{dowlin2018family} gives us $\frac{1}{n}\Z$-graded spectral sequences from $\overline{\HFK}'(L)$ to $\overline{\HFK}'_n(L)$ and from $\HFK'(L)$ to $\HFK'_n(L)$, where $\frac{\gr_n}{n}$ on $\overline{\HFK}'(L)$ and $\HFK'(L)$ is defined to be $\gr_M - 2(1-1/n)\gr_T$. 

When $n = 1$, the shifted homology groups $\overline{\HFK}'_1(L)$ and $\HFK'_1(L)$ agree with $\overline{\HFK}_1(L)$ and $\HFK_1(L)$ respectively, so their Euler characteristics are both $1$.

\begin{remark}
The arguments in this section can be made simpler in the case of knots ($\ell = 1$), where by \cite[Lemma 2.20]{dowlin2018family}, $\overline{\HFK}_n(L)$ is isomorphic to $\frac{\gr_n}{n}$-graded $\overline{\HFK}(L) = \widehat{\HFK}(L)$. In particular, Lemma~\ref{lem:NtoHat} is unnecessary in this case.
\end{remark}

\section{Euler characteristics and spectral sequences}\label{sec:SpectralSequences}

Dowlin \cite[Conjecture 1.6]{dowlin2018family} conjectures the existence of spectral sequences from $\overline{H}_n(L)$ to $\overline{\HFK}_n(L)$ and from $H_n(L)$ to $\HFK_n(L)$. These sequences are conjectured to respect the $\frac{1}{n}\Z$-gradings, where the $\frac{1}{n}\Z$-grading $\frac{\gr_n}{n}$ on reduced and unreduced $\mathfrak{sl(n)}$ homology is defined by
\[
\frac{\gr_n}{n} = \frac{1}{n}\gr_{Q,n} + \gr_H.
\]
Dowlin works with $n$ times this grading, which we would write as $\gr_{Q,n} + n \gr_H$.\footnote{In Dowlin's notation this grading is called $\mathbf{gr}_n + \frac{n}{2} \mathbf{gr}_v$ (where $\mathbf{gr}_n$ corresponds to our $\gr_{Q,n}$). As specified in \cite[Section 4.2]{dowlin2018family}, the grading $\mathbf{gr}_v$ here is $k = 2 \gr_v$ in the notation of \cite{rasmussen2006some}, where $\gr_v$ corresponds to our $\gr_H$. This accounts for the factor of $\frac{1}{2}$ in Dowlin's formula.} We state the following version of Dowlin's conjectures in terms of the grading-shifted theories $\overline{\HFK}'_n$ and $\HFK'_n$.

\begin{conjecture}\label{conj:DowlinPrime}
Let $L$ be a link in $S^3$. There exist spectral sequences with each page graded by $\frac{1}{n}\Z$, with differentials of degree $+1$ such that each page is the $\frac{1}{n}\Z$-graded homology of the previous page, and with
\begin{itemize}
\item $E_2$ page $\overline{H}_n(L)$ and $E_{\infty}$ page $\overline{\HFK}'_n(L)$;
\item $E_2$ page $H_n(L)$ and $E_{\infty}$ page $\HFK'_n(L)$
\end{itemize}
as $\frac{1}{n}\Z$-graded vector spaces.
\end{conjecture}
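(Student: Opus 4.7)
The statement is labeled a conjecture and, as phrased, asks for a construction of spectral sequences generalizing Dowlin's $n=2$ result \cite{dowlin2018spectral}; a full proof is almost certainly beyond the scope of the current paper. So my plan is two-pronged: first, verify the decategorified content of the conjecture (which does not require assuming it), and second, indicate what the main obstacle would be in trying to prove the conjecture itself.

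For the decategorified content, the plan is to compute the $\frac{\gr_n}{n}$-graded Euler characteristics of both the putative $E_2$ pages and the putative $E_\infty$ pages independently and check they agree; by Remark~\ref{rem:BigradedSS} (or its fractionally-graded analogue, which holds because $\chi(C) = \chi(H_*(C))$ for finite-dimensional $\frac{1}{n}\Z$-graded complexes and each page of the conjectured sequence is finite-dimensional), existence of the conjectured sequence would force such an equality, so matching Euler characteristics is a necessary condition. First I would expand the definition of $\chi$ on $\overline{H}_n(L)$ with its $\frac{\gr_n}{n} = \frac{1}{n}\gr_{Q,n} + \gr_H$ grading: a generator in bidegree $(\gr_{Q,n}, \gr_H) = (Q, H)$ contributes $e^{\pi i(Q/n + H)} = (e^{\pi i/n})^Q (-1)^H$, so $\chi(\overline{H}_n(L)) = \overline{P}_{n,L}(q)|_{q = e^{\pi i/n}}$; substituting $\overline{P}_{n,L}(q) = \overline{P}_L(q^n,q)$ yields $\overline{P}_L(-1, e^{\pi i/n})$, which by the second list of identities in Section~\ref{sec:LinkPolys} (using $-e^{\pi i / n}$ as $-t^{1/2}$, i.e. $t^{1/2} = -e^{\pi i / n}$) equals $\Delta_L(t)|_{t^{1/2} = -e^{\pi i/n}}$. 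Next I would invoke the corollary established just above Definition~\ref{def:GradingShiftedHFKn}, giving $\chi(\overline{\HFK}'_n(L)) = \Delta_L(t)|_{t^{1/2} = -e^{\pi i/n}}$ as well. The unreduced case is parallel and easier: both $\chi(H_n(L)) = P_L(-1, e^{\pi i/n})$ and $\chi(\HFK'_n(L))$ are zero (for $n \geq 2$) by the same HOMFLY-PT evaluation identities and the Euler characteristic computation for $\HFK'_n$. For $n = 1$ both sides give $1$ on any link, in agreement.

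Toward the conjecture itself, the natural approach would mirror Dowlin's $n=2$ argument: produce a (relatively) bigraded chain complex over a suitable polynomial ring whose associated graded (with respect to some $\mathfrak{sl}(n)$-adapted filtration) computes $\overline{H}_n(L)$ and $H_n(L)$, and whose total homology computes $\overline{\HFK}'_n(L)$ and $\HFK'_n(L)$ respectively; the induced spectral sequence would be the desired one, and one would verify that the gradings match those of Definition~\ref{def:GradingShiftedHFKn} and of Dowlin's collapse $\frac{\gr_n}{n} = \frac{1}{n}\gr_{Q,n} + \gr_H$. A plausible route is to replace the Koszul-type deformation appearing in Dowlin's singular cube-of-resolutions complex \cite{DowlinSingular} with an $n$-th power deformation of the form $V_i - (U_{a(i)}^n - U_{b(i)}^n)/(U_{a(i)} - U_{b(i)})$, matching the defining relations of $\overline{\CFK}_n$, and to filter by powers of this deformation so that the $E_1$ page resembles the $\mathfrak{sl}(n)$ MOY-calculus foam complexes of Khovanov--Rozansky.

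The main obstacle is the construction (and invariance) of such an $\mathfrak{sl}(n)$-analog of Dowlin's complex: even for $n = 2$ the identification of the $E_2$ page with $\mathfrak{sl}(2)$ homology is the deepest part of \cite{dowlin2018spectral}, and for general $n$ one must contend with the considerably more intricate foam relations and the fact that $\overline{H}_n$ is not currently known to arise from a cube of resolutions compatible with knot Floer theory in the way Khovanov homology does. Absent such a construction, verifying the Euler characteristic equality above is the best one can currently do, and it is precisely what the remainder of Section~\ref{sec:SpectralSequences} should be carrying out and assembling into the square of spectral sequences advertised in the introduction.
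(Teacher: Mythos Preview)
Your assessment is correct: the statement is a conjecture, and the paper does not prove it. What the paper does after stating it is exactly your first prong---compute $\chi(\overline{H}_n(L))$ by expanding the $\frac{\gr_n}{n}$-graded Euler characteristic to obtain $\overline{P}_{n,L}(e^{\pi i/n})$, and observe that this matches $\chi(\overline{\HFK}'_n(L)) = \Delta_L(t)|_{t^{1/2} = -e^{\pi i/n}}$ (similarly in the unreduced and $n=1$ cases). Your computation is the same as the paper's. One small referencing slip: the corollary giving $\chi(\overline{\HFK}'_n(L))$ appears just \emph{after} Definition~\ref{def:GradingShiftedHFKn}, not just above it; the corollary above that definition is for the unshifted $\overline{\HFK}_n(L)$.

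Your second prong---sketching a possible route to an actual proof via an $n$-th power deformation of Dowlin's cube-of-resolutions complex and a filtration whose associated graded recovers the $\mathfrak{sl}(n)$ foam complex---goes beyond anything the paper attempts. The paper simply leaves the conjecture open and records the Euler-characteristic compatibility (and the broader compatibility in the square of Figure~\ref{fig:SpectralSeqs}) as evidence. Your outline is reasonable speculation, and your identification of the main obstacle (matching the $E_2$ page with $\mathfrak{sl}(n)$ homology, already the hardest step for $n=2$) is apt, but none of this is in the paper and none of it is needed to reproduce the paper's treatment of the statement.
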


These spectral sequences would give equalities of $\frac{1}{n}\Z$-graded Euler characteristics
\[
\chi(\overline{H}_n(L)) = \chi(\overline{\HFK}'_n(L)), \quad \chi(H_n(L)) = \chi(\HFK'_n(L)).
\]
We have
\begin{align*}
\chi(\overline{H}_n(L)) &= \sum_{\alpha \in \frac{1}{n}\Z} e^{\pi i \alpha} \dim_{\Q} \left( \overline{H}_n(L)_{\frac{1}{n}\gr_{Q,n} + \gr_H = \alpha} \right) \\
&= \sum_{I, J \in \Z} e^{\pi i (I/n + J)} \dim_{\Q} \left( \overline{H}_n(L)_{\gr_{Q,n} = I, \gr_H = J} \right) \\
&= \left( \sum_{I, J \in \Z} (-1)^J q^I \dim_{\Q} \left( \overline{H}_n(L)_{\gr_{Q,n} = I, \gr_H = J} \right) \right) \Bigg|_{q = e^{\pi i / n}} \\
&= \overline{P}_{n,L}(e^{\pi i / n});
\end{align*}
similarly, $\chi(H_n(L)) = P_{n,L}(e^{\pi i / n})$, which is $0$ for $n \geq 2$ and $1$ for $n = 1$. Thus, for $n \geq 2$ these conjectured spectral sequences can be viewed as categorifications of the equalities
\[
\overline{P}_{n,L}(e^{\pi i / n}) = \Delta_L(t)|_{t^{1/2} = -e^{\pi i /n}}, \quad P_{n,L}(e^{\pi i / n}) = 0.
\]
For $n = 1$, the equalities are $\overline{P}_{1,L}(-1) = 1$ and $P_{1,L}(-1) = 1$ (note that $\overline{P}_{1,L}(q) = P_{1,L}(q) = 1$ in general).

\begin{remark}
Let $n \geq 2$ for simplicity. The spectral sequences of Conjecture~\ref{conj:DowlinPrime}, together with the ones from Sections \ref{sec:RasmussenSeq} and \ref{sec:HOMFLYtoHFK} and the spectral sequences from $\HFK$ to $\HFK_n$, can be organized as shown in Figure~\ref{fig:SpectralSeqs}, following \cite[Figure 1]{dowlin2018family}. The arrows in this figure represent spectral sequences (solid for known, dotted for conjectural); we augment Dowlin's figure by labeled the arrows with their decategorified content.\footnote{In fact, ``decategorified content'' is not immediately defined for the arrows on the right edge of the square; the grading-collapsed versions of $\overline{\HFK}(n)$ and $\HFK(n)$ are not finite-dimensional over $\Q$ so their Euler characteristics as $\frac{1}{n}\Z$-graded complexes are not defined. It is reasonable to guess that if a natural definition of these Euler characteristics did exist, it would give the answer in the figure.}

It is interesting to look at the square formed by the reduced theories; traveling along the left edge and then the bottom edge amounts to starting with $\overline{P}_L(a,q)$, evaluating at $a = q^n$, and then evaluating the result at $q = e^{\pi i / n}$ to get $\overline{P}_L(-1, e^{\pi i / n})$. On the other hand, traveling along the top edge and then the right edge amounts to starting with $\overline{P}_L(a,q)$, evaluating at $a=-1$ and $q = -t^{1/2}$, and then evaluating the result at $t^{1/2} = -e^{\pi i / n}$ to get $\overline{P}_L(-1,e^{\pi i / n})$. This compatibility at the level of Euler characteristics could be a sign of a more elaborate compatibility relationship between the conjectured spectral sequences at the categorified level.
\end{remark}

\begin{figure}
\includegraphics[scale=0.7]{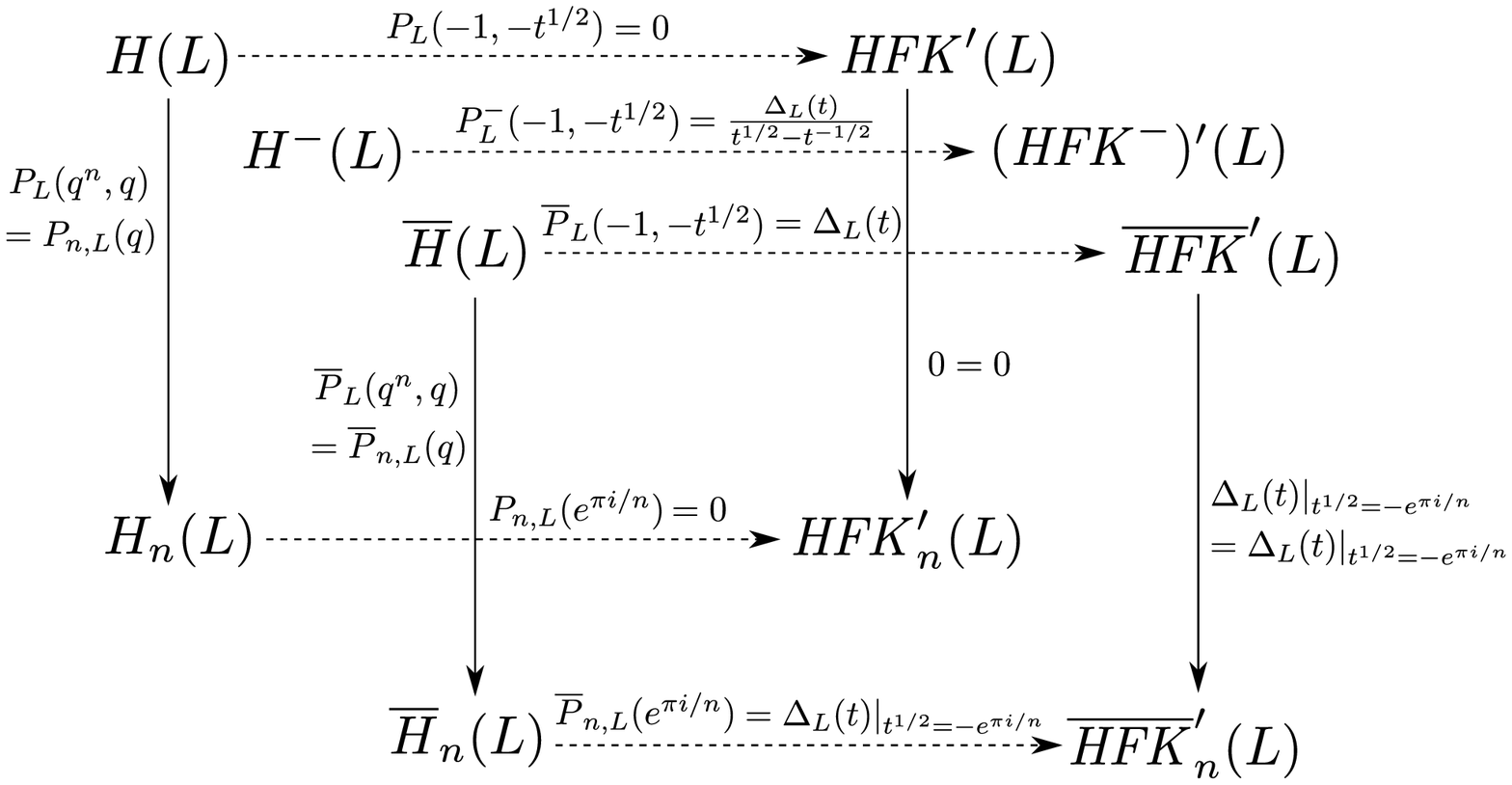}
\caption{Spectral sequences from \cite[Figure 1]{dowlin2018family}, with arrows labeled by decategorified content.}
\label{fig:SpectralSeqs}
\end{figure}

\bibliographystyle{alpha}
\bibliography{HOMFLYcite}

\end{document}